\newcommand{\Sfrac}[2]{{ \textstyle \frac{#1}{#2}}}
\newtheorem{thm}{Theorem} [section]
\newtheorem{lem}[thm]{Lemma}
\newtheorem{prop}[thm]{Proposition}
\newtheorem{df}[thm]{Definition}
\newcommand{\intall}{\int_{\R}}
\newcommand{\inthalf}{\int_{\R_+}}
\def\R{I\!\!R}
\def\pa{\partial}
\def\cal{\mathcal}
\begin{document}
$\;$

\vspace{0.4in}

\centerline{\Large{Existence and Uniqueness of Singular Solutions for a}} 

\vspace{0.1in}

\centerline{\Large{Conservation Law Arising in Magnetohydrodynamics}}


\vspace{0.5in}

\centerline{\large{Henrik Kalisch}\footnote{\texttt{Department of Mathematics, University of Bergen, Bergen, Norway}
            },
            \large{Darko Mitrovic}\footnote{\texttt{Department of Mathematics, University of Montenegro, Podgorica, Montenegro}
            \break
            }, \large{Vincent Teyekpiti$^{1}$}}

\vspace{.4in}

\begin{abstract}
The Brio system is a two-by-two system of conservation laws arising as a simplified 
model in ideal magnetohydrodynamics (MHD). The system has the form
\begin{align*}
\partial_tu+\partial_x\big(\Sfrac{u^2+v^2}{2}\big)=0,\\
\partial_tv+\partial_x\big(v(u-1)\big)=0.
\end{align*} 
It was found in previous works that the standard theory of hyperbolic conservation laws
does not apply to this system since the characteristic fields are not genuinely nonlinear
on the set $v=0$. As a consequence, certain Riemann problems have no weak solutions
in the traditional class of functions of bounded variation.

It was argued in \cite{HL} that in order to solve the system, singular solutions
containing Dirac masses along the shock waves might have to be used. Solutions
of this type were exhibited in \cite{KM,Sarr}, but uniqueness was not obtained.

In the current work, we introduce a nonlinear change of variables which makes it possible
to solve the Riemann problem in the framework of the standard theory of conservation laws.
In addition, we develop a criterion which leads to an admissibility condition
for singular solutions of the original system, 
and it can be shown that admissible solutions are unique in the framework developed here.
\end{abstract}

\vspace{.3in}


\section{Introduction}

Conservation laws have been used as a mathematical tool in a variety of
situations in order to provide a simplified description of complex
physical phenomena which nevertheless keeps the essential features
of the processes to be described, and the general theory of hyperbolic conservation
laws aims to provide a unified set of techniques needed to understand
the mathematical properties of such equations. However, in some cases,
the general theory fails to provide a firm mathematical description 
for a particular case because some of the assumptions needed
in the theory are not in place.

In the present contribution we focus on such an example,
a hyperbolic conservation law appearing in ideal magnetohydrodynamics. 
For this conservation law, solutions cannot be found using the classical
techniques of conservation laws, and a new approach is needed.

Magnetohydrodynamics (MHD) is the study of how electric currents in a moving
conductive fluid interact with the magnetic field created by the moving fluid itself.
The MHD equations are a combination of the Navier-Stokes equations of fluid mechanics
and Maxwell's equations of electromagnetism, and the equations are generally coupled
in such a way that they must be solved simultaneously. The ideal MHD equations are
based on a combination of the Euler equations of fluid mechanics 
(i.e. for an inviscid and incompressible fluid)
and a simplified form of Maxwell's equations.
The resulting system is highly complex and one needs to rely on numerical 
approximation of solutions in order to understand the dynamics of the system.

As even the numerical study of the full system is very challenging, it can be
convenient to introduce some simplifying assumptions -- valid in some limiting cases --
in order to get a better idea of the qualitative properties of the system, 
and in order to provide some test cases against which numerical codes 
for the full MHD system can be tested.

The emergence of coherent structures in turbulent plasmas has
been long observed both in numerical simulations and experiments.
%
%
Moreover, the tendency of the magnetic field to organize into
low-dimensional structures such as two-dimensional
magnetic pancakes and one-dimensional magnetic ropes is well known.
As a consequence, in certain cases it makes sense to use simplified
one or two dimensional model equations.
Such simplified equations will be easier to solve, but nevertheless preserve some
of the important features observed in MHD systems.
In \cite{brio}, a simplified model system for 
ideal MHD was built using such phenomenological considerations. The system is written as
\begin{equation}\label{Brio System}
\begin{split}
\partial_t u+\partial_x\big(\Sfrac{u^2+v^2}{2}\big)=0,\\
\partial_t v+\partial_x\big(v(u-1)\big)=0.
\end{split}
\end{equation} 
The quantities $u$ and $v$ are the velocity components of the fluid whose 
dynamics is determined by MHD forces, and the system represents the conservation of the velocities. 
Velocity conservation in this form holds only in idealized situations in the case of smooth solutions, 
and 
the limitation of this assumption manifests itself in the non-solvability of the system 
even for the simplest piece-wise constant initial data, i.e. 
for certain dispositions of the Riemann initial data
\begin{equation}
\label{riemann}
u|_{t=0}=\begin{cases}
U_L, &x\leq 0\\
U_R, &x> 0
\end{cases}, \qquad v|_{t=0}=\begin{cases}
V_L, &x\leq 0\\
V_R, &x> 0
\end{cases}.
\end{equation}

From a mathematical point of view, the characteristic fields of this system are neither 
genuinely nonlinear nor linearly degenerate in certain regions in the $(u,v)$-plane (see \cite{HL}). 
In this case the standard theory of hyperbolic conservation laws which can be found in e.g. \cite{Daf} 
does not apply and one cannot find a classical Riemann solution admissible in the sense of 
Lax \cite{Lax} or Liu \cite{Liu}.

In order to deal with the problem of non-existence of solutions to the
Riemann problem for certain conservation laws, the concept of singular solutions
incorporating $\delta$-distributions along shock trajectories was introduced in \cite{Korchinski}. 
The idea was pursued further in \cite{KK,HL}, and by now, the literature on the subject is rather
extensive. Some authors have defined theories of distribution products in order to incorporate
the $\delta$-distributions into the notion of weak solutions \cite{DMLM,huangWang,Sarr}.
In other works, the need to multiply $\delta$-distributions has been avoided either
by working with integrated equations \cite{Huang,KaTe}, or by making an appropriate definition
of singular solutions \cite{DSH}. In order to find admissibility conditions for such
singular solutions, some authors have used the weak asymptotic method \cite{DSH,DOS,MN_arma,Omelyanov}.
With the aim of dealing with the nonlinearity featured by the system \eqref{Brio System}, 
the weak asymptotic method was also extended to include complex-valued approximations \cite{KM}.
The authors of \cite{KM} were able to provide singular solutions of \eqref{Brio System}
even in cases which could not be resolved earlier. However, even if \cite{KM} provides some
admissibility conditions, the authors of \cite{KM} did not succeed to prove uniqueness.
Existence of singular solutions to \eqref{Brio System} was also proved in \cite{Sarr} using
the theory of distribution products, but uniqueness could not be obtained.

Therefore, it was natural to ask whether the Brio system should be solved 
in the framework of $\delta$-distributions 
as conjectured in \cite{HL} where the system was first considered 
from the viewpoint of the conservation laws theory. 
The authors of \cite{HL} compared \eqref{Brio System} with the triangular system
\begin{equation}
\label{triang}
\begin{split}
\partial_t u+\partial_x\big(\Sfrac{u^2}{2}\big)=0,\\
\partial_t v+\partial_x\big(v(u-1)\big)=0.
\end{split}
\end{equation}  
which differs from \eqref{Brio System} in the quadratic term $v^2$. However, the system \eqref{triang} is linear with respect to $v$ and it naturally admits $\delta$-type solutions (obtained e.g. via the vanishing viscosity approximation). To this end, let us remark that most of the systems admitting $\delta$-shock wave solutions are linear with respect to one of the unknown functions \cite{DMLM, DSH, HL, huangWang, KK}. There are also a number of systems which can be solved only by introducing the $\delta$-solution concept and which are non-linear with respect to both of the variables such as the chromatography system \cite{Sun} or the Chaplygin gas system \cite{N1}. However, in all such systems, it was possible to control the nonlinear operation over an approximation of the $\delta$-distribution. This is not the case with \eqref{Brio System} since the term $u^2+v^2$ will necessarily tend to infinity for any real approximation of the $\delta$-function.
This problem can be dealt with by introducing complex-valued approximations of the $\delta$-distribution. 
Using this approach, a somewhat general theory can be developed as follows.
Consider the system
\begin{equation}
\label{gensystem}
\begin{split}
\pa_t u + \pa_x f(u,v)  =&  0,       \\
\pa_t v + \pa_x g(u,v) =& 0,
\end{split}
\end{equation} 
The following definition gives the notion of $\delta$-shock solution to system \eqref{gensystem}.
\begin{df}
\label{def-gen}
The pair of distributions
\begin{equation}
\label{delta-sol}
u=U+\alpha(x,t)\delta(\Gamma) \ \ \text{and} \ \ v=V+\beta(x,t)\delta(\Gamma)
\end{equation} are called a generalized $\delta$-shock wave solution of system
\eqref{gensystem} with the initial data
$U_0(x)$ and $V_0(x)$
if the integral identities
\begin{align}
\label{m1-g1}
\nonumber 
&\inthalf \! \! \intall \left( U\pa_t\varphi +f(U,V)
\pa_x\varphi\right)~dxdt \\ 
&\qquad + \sum\limits_{i\in I}\int_{\gamma_i}\alpha_i(x,t)
\Sfrac{\pa \varphi(x,t)}{\pa {\bf l}} \,  + \intall U_0(x)\varphi(x,0)~dx = 0,
\end{align}
\begin{align}
\label{m2-g1}
\nonumber
&\inthalf \! \! \intall \left(V\pa_t\varphi+ g(U,V)\pa_x\varphi\right)~dxdt\\
&\qquad + \sum\limits_{i\in I}\int_{\gamma_i}\beta_i(x,t)
\Sfrac{\pa \varphi(x,t)}{\pa {\bf l}} \, + \int_{\R} V_0(x)
\varphi(x,0)~dx  = 0, 
\end{align}
hold for all test functions $
\varphi\in {\cal D}(\R\times \R_+)$.
\end{df}

This definition may be interpreted as an extension of the
classical notion of weak solutions. 
The definition is consistent with the concept of measure solutions 
as put forward in \cite{DMLM, huangWang} in the sense that the two singular parts 
of the solution coincide, while the regular parts differ on a set of Lebesgue measure zero. 
However, Definition \ref{def-gen} can be applied to any hyperbolic system of equations 
while the solution concept from \cite{DMLM} only works in the special situation 
when the $\delta$-distribution is attached to an unknown which appears linearly in the flux $f$ or $g$,
or when nonlinear operations on $\delta$ can somehow be controlled in another way.

Definition \ref{def-gen} is quite general, allowing a combination of initial
steps and delta distributions; but its effectiveness is already
demonstrated by considering the Riemann problem with a single jump.
Indeed, for this configuration it can be shown that a $\delta$-shock
wave solution exists for any $2\times 2$ system of conservation laws.

Consider the Riemann problem for \eqref{gensystem} with initial data
$u(x,0) = U_0(x)$ and  $v(x,0) = V_0(x)$,
where
\begin{equation}
\label{rieman-data}
U_0(x)=\begin{cases} u_1, &x<0,\\
u_2, & x>0,
\end{cases} \quad
V_0(x)=\begin{cases} v_1, &x<0,\\
v_2, & x>0.
\end{cases}
\end{equation}
Then, the following theorem holds:
\begin{thm}\label{thm-cnl}
{\bf a)}
If $u_1\neq u_2$ then the pair of distributions
\begin{eqnarray}\label{sol-a1}
u(x,t) & = & U_0(x-ct), \\
\label{sol-a2}
v(x,t) & = & V_0(x-ct) + \beta(t) \delta(x-ct),
\end{eqnarray}
where
\begin{equation}
\label{RH-def1}
c=\frac{[f(U,V)]}{[U]}=\frac{f(u_2,v_2)-f(u_1,v_1)}{u_2-u_1}, \ \mbox{ and } \
\beta(t)=(c[V]-[g(U,V)])t,
\end{equation}
represents the $\delta$-shock wave solution of \eqref{gensystem}
with initial data $U_0(x)$ and $V_0(x)$
in the sense of Definition \ref{def-gen} with $\alpha(t)=0$.

\vskip 0.1in

\noindent
{\bf b)} If $v_1\neq v_2$ then the pair of distributions
\begin{eqnarray}\label{sol-b1}
u(x,t) & = & U_0(x-ct) + \alpha(t) \delta(x-ct), \\
\label{sol-b2}
v(x,t) & = & V_0(x-ct),
\end{eqnarray}
where
\begin{equation}
\label{RH-def2}
c=\frac{[g(U,V)]}{[V]}=\frac{g(u_2,v_2)-g(u_1,v_1)}{v_2-v_1}, \ \
\alpha(t)=(c[U]-[f(U,V)])t
\end{equation}
represents the $\delta$-shock solution of \eqref{gensystem} with
initial data $U_0(x)$ and $V_0(x)$ in the sense of Definition
\ref{def-gen} with $\beta(t)=0$.
\end{thm}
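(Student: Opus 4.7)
The plan is to verify the two integral identities \eqref{m1-g1}, \eqref{m2-g1} of Definition \ref{def-gen} by direct substitution of the ansatz, using the divergence theorem on each of the two regions into which the shock line $x=ct$ cuts the upper half-plane, followed by one integration by parts along that line.

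For part (a), since $\alpha\equiv 0$ and $u(x,t)=U_0(x-ct)$ is an ordinary piecewise-constant travelling step, the first identity is a standard Rankine--Hugoniot calculation. Splitting $\R\times\R_+$ into $\{x<ct\}$ and $\{x>ct\}$, on each of which $U$ and $f(U,V)$ are constant, Green's theorem converts the double integral into boundary contributions along $t=0$ and along the shock line $\gamma$. The contribution from $t=0$ exactly cancels the initial-data term $\int_\R U_0(x)\varphi(x,0)\,dx$, and the shock-line contribution reduces to $\int_0^\infty\bigl(c[U]-[f(U,V)]\bigr)\varphi(ct,t)\,dt$. The choice $c=[f(U,V)]/[U]$, which uses $u_1\neq u_2$, makes this vanish for every test function, giving the first formula in \eqref{RH-def1}.

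For the second identity the same divergence-theorem computation, now with $V$ and $g(U,V)$ piecewise constant, produces a residual $\int_0^\infty\bigl(c[V]-[g(U,V)]\bigr)\varphi(ct,t)\,dt$ along $\gamma$ which is generically nonzero, because $c$ has already been fixed by the first equation. The $\delta$-correction is designed to cancel it. Parametrizing $\gamma=\{x=ct\}$ by $t$ and interpreting $\partial/\partial\mathbf{l}$ as the tangential derivative $\partial_t+c\partial_x$, so that $\partial_\mathbf{l}\varphi(ct,t)=\tfrac{d}{dt}\varphi(ct,t)$, a single integration by parts (valid since $\beta(0)=0$ and $\varphi$ has compact support) yields
\begin{equation*}
\int_{\gamma}\beta(t)\,\frac{\partial\varphi}{\partial\mathbf{l}}\;=\;-\int_0^\infty\beta'(t)\,\varphi(ct,t)\,dt.
\end{equation*}
Matching this against the shock residual forces $\beta'(t)=c[V]-[g(U,V)]$, and integrating from $0$ gives the prescribed $\beta(t)=(c[V]-[g(U,V)])t$ in \eqref{RH-def1}. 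Part (b) is proved by an entirely symmetric argument: one simply interchanges the roles of the two equations, choosing $c=[g(U,V)]/[V]$ (requiring $v_1\neq v_2$) so that the $v$-equation has no residual, and letting $\alpha(t)$ absorb the residual from the $u$-equation.

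The only delicate point is really a matter of bookkeeping rather than a substantial obstacle: one must fix the normalization of the line measure along $\gamma$ compatibly with the definition of $\partial/\partial\mathbf{l}$ so that the pairing $\int_\gamma\beta\,\partial_\mathbf{l}\varphi$ produces precisely $\int_0^\infty\beta(t)\,\tfrac{d}{dt}\varphi(ct,t)\,dt$. Once this convention is specified the entire proof collapses to a Rankine--Hugoniot identity for the ``regular'' equation and a single integration by parts for the equation that carries the delta mass.
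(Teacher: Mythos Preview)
Your proposal is correct and follows essentially the same approach as the paper's proof. The paper's argument is very terse---it simply asserts that the first identity reduces to the Rankine--Hugoniot choice of $c$ and that ``standard transformations'' in the second identity yield the single equation $\int_{\R_+}\bigl(-c[V]+[g(U,V)]\bigr)\varphi(ct,t)\,dt-\int_{\R_+}\beta'(t)\varphi(ct,t)\,dt=0$---whereas you spell out those transformations explicitly via the divergence theorem on the two half-planes and an integration by parts along $\gamma$ using $\beta(0)=0$; both proofs then appeal to symmetry for part (b).
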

\begin{proof}
We will prove only the first part of the theorem as the
second part can be proved analogously. We immediately see that
$u$ and $v$ given by \eqref{sol-a1} and \eqref{sol-a2}
satisfy \eqref{m1-g1} since $c$ is given exactly by the
Rankine-Hugoniot condition derived from that system. By
substituting $u$ and $v$ into \eqref{m2-g1},
we get after standard transformations:
\begin{align*}
\int_{\R_+}\left(-c[V]+[g(u,V)]\right)\varphi(ct,t)~dt
-\int_{\R_+}\alpha'(t)\varphi(ct,t)~dt = 0.
\end{align*}
From here and since $\alpha(0)=0$, the conclusion follows
immediately.
\end{proof}

As the solution framework of Definition \ref{def-gen} is very weak,
one might expect non-uniqueness issues to arise.
This is indeed the case, and the proof of the following proposition
is an easy exercise.
\begin{prop}\label{non-unique}
System \eqref{gensystem} with the zero initial data:
$u|_{t=0}=v|_{t=0}=0$ admits $\delta$-shock solutions of the form:
\begin{align*}
u(x,t)=0, \ \ v(x,t)=\beta\delta(x-c_1t)-\beta\delta(x-c_2 t),
\end{align*}for arbitrary constants $\beta$, $c_1$ and $c_2$.
\end{prop}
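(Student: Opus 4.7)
The plan is to verify directly that the proposed pair $(u,v)$ satisfies the integral identities \eqref{m1-g1} and \eqref{m2-g1} in Definition \ref{def-gen}, using two singular curves $\gamma_1 = \{x = c_1 t\}$ and $\gamma_2 = \{x = c_2 t\}$ with coefficients $\beta_1 \equiv \beta$ and $\beta_2 \equiv -\beta$, while the $\alpha_i$ both vanish. The regular parts are $U \equiv 0$ and $V \equiv 0$, and the initial data are zero, so every term in the definition involving $U$, $V$, or $U_0$, $V_0$ drops out, with the only subtlety being that the double integrals of $f(0,0)\partial_x\varphi$ and $g(0,0)\partial_x\varphi$ vanish by integrating in $x$ for fixed $t$ and using the compact support of $\varphi$. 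Thus \eqref{m1-g1} is satisfied trivially (both $\alpha_i$ are zero), and the whole content of the claim reduces to checking that the sum of line integrals appearing in \eqref{m2-g1} vanishes.

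Next I would parametrize each $\gamma_i$ by $t \in (0,\infty)$ through $(x,t) = (c_i t, t)$, so that the tangential derivative along $\gamma_i$ is a positive multiple of $(c_i \partial_x + \partial_t)\varphi(c_i t, t) = \frac{d}{dt}\varphi(c_i t, t)$. Since $\beta_i$ is constant in $t$ along each curve, a single integration by parts gives
\begin{align*}
\int_{\gamma_1} \beta_1 \frac{\partial\varphi}{\partial\mathbf{l}}
 &= \beta \int_0^\infty \frac{d}{dt}\varphi(c_1 t, t)\, dt
  = -\beta\,\varphi(0,0),\\
\int_{\gamma_2} \beta_2 \frac{\partial\varphi}{\partial\mathbf{l}}
 &= -\beta \int_0^\infty \frac{d}{dt}\varphi(c_2 t, t)\, dt
  = \beta\,\varphi(0,0),
\end{align*}
where the boundary terms at $t=\infty$ vanish by the compact support of $\varphi$. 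Adding these two contributions yields zero, so \eqref{m2-g1} holds for every test function $\varphi \in \mathcal{D}(\mathbb{R}\times\mathbb{R}_+)$.

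There is no real obstacle in this argument; the only mild bookkeeping issue is making sure the boundary term at $t=0$ from the two curves cancels precisely, which is guaranteed by the opposite signs of $\beta_1$ and $\beta_2$ combined with the fact that both curves emanate from the common point $(0,0)$. The proposition follows, and the construction transparently illustrates why Definition \ref{def-gen} alone cannot select a unique singular solution: one can dress the zero solution with any pair of canceling delta masses propagating at arbitrary speeds, which is exactly the motivation for introducing the admissibility criterion developed in the remainder of the paper.
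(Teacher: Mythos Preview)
Your verification is correct. The paper does not supply a proof for this proposition, stating only that it ``is an easy exercise''; your direct check of the two integral identities in Definition~\ref{def-gen} is precisely the natural argument one would expect. The observation that the constants $f(0,0)$ and $g(0,0)$ contribute nothing because $\int_{\mathbb{R}}\partial_x\varphi\,dx=0$, and the cancellation of the boundary contributions $\mp\beta\,\varphi(0,0)$ at the common origin of the two curves, are exactly the two points that make the exercise work.
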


As already alluded to, a different formal approach for solving \eqref{Brio System} 
was used by \cite{Sarr}. However, just as in \cite{KM} the definition of singular solutions 
used in \cite{Sarr} is so weak that uniqueness cannot be obtained. 
Another problem left open in \cite{KM, Sarr} is the physical meaning of the $\delta$-distribution 
appearing as the part of the solution. 
Considering systems such as the Chaplygin gas system or \eqref{triang}, 
the use of the $\delta$-distribution in the solution can be justified by 
invoking extreme concentration effects if we assume that $v$ represents density. 
However, in the case of the Brio system, $u$ and $v$ are velocities and unbounded
velocities cannot be explained in any reasonable physical way.

In the present contribution, we shall try to explain necessity of $\delta$-type 
solutions for \eqref{Brio System} following considerations from \cite{KT} where it was argued 
(in a quite different setting) that the wrong variables are conserved. 
In other words, the presence of a $\delta$-distribution in a weak solution actually signifies 
the inadequacy of the corresponding conservation law in the case of weak solutions.
Similar consideration were recently put forward in the case of singular solutions in
the shallow-water system \cite{KMT}.

Starting from this point, we are able to formulate uniqueness requirement for the Riemann problem 
for \eqref{Brio System}. First, we shall rewrite the system using the energy $q=(u^2+v^2)/2$ 
as one of the conserved quantities (which is actually an entropy function corresponding to \eqref{Brio System}). 
Thus, we obtain a strictly hyperbolic and genuinely nonlinear system which admits a Lax admissible solution 
for any Riemann problem. Such a solution is unique and it will give a unique $\delta$-type solution 
to the original system. The $\delta$-distribution will necessarily appear 
due to the nonlinear transformation that we apply.

The paper is organized as follows: In Section 2, we shall rewrite \eqref{Brio System} 
in the new variables $q$ and $u$, and exhibit the admissible shock and rarefaction waves.
In Section 3, we shall introduce the admissibility concept for solutions of the
original system \eqref{Brio System},
and prove existence and uniqueness of a solution to the Riemann problem
in the framework of that definition. 

\section{Energy-velocity conservation}
As mentioned above, conservation of velocity is not necessarily a physically well defined balance law,
and it might be preferable to specify conservation of energy for example.
Actually, in some cases, conservation of velocity does give an appropriate balance law,
such as for example in the case of shallow-water flows \cite{GKK}.
In the present situation, it appears natural to replace at least one of the
equations of velocity conservation. As will be seen momentarily,
such a system will be strictly 
hyperbolic with genuinely nonlinear characteristic fields,
so that the system will be more amenable to standard method of hyperbolic conservation laws.
To introduce the new conservation law, we define an energy function 
\begin{equation}\label{Entropy Function}
q(u,v)=\frac{u^2+v^2}{2},
\end{equation}
and note that this function is a mathematical entropy for the system \eqref{Brio System}.
Then we use the transformation
\begin{equation*}
(u,v)\rightarrow \big(u,\Sfrac{u^2+v^2}{2} \big),
\end{equation*}
to transform \eqref{Brio System} into the system 
\begin{equation}\label{Transformed system}
\begin{split}
\partial_tu + \partial_xq=0,\\
\partial_tq + \partial_x \big((2u-1)q+\Sfrac{u^2}{2}-\Sfrac{2u^3}{3} \big)=0.
\end{split}
\end{equation} 

System \eqref{Brio System} and the transformed  system \eqref{Transformed system}
are equivalent for differentiable solutions. 
However, as will be evident momentarily, the nonlinear transformation changes the character of the system, 
and while \eqref{Brio System} is not always genuinely nonlinear, 
the new system \eqref{Transformed system} is always strictly hyperbolic and  genuinely nonlinear.

In the following, we analyze \eqref{Transformed system}, and find the elementary
waves for the solution of \eqref{Transformed system}.
The flux function of the new system is given by 
\begin{equation*}
F=
	\begin{pmatrix}
	q\\ (2u-1)q+\frac{u^2}{2}-\frac{2u^3}{3}
	\end{pmatrix}
\end{equation*}
with flux Jacobian 
\begin{equation*}
DF=
	\begin{pmatrix}
	0 & 1\\ 2q+u-2u^2 & 2 u-1
	\end{pmatrix}.
\end{equation*}
The characteristic velocities are given by
\begin{equation}\label{eigenvalues}
\lambda_{-,+} = \frac{2u-1 \mp \sqrt{8q-4u^2+1}}{2}.
\end{equation}
A direct consequence of \eqref{Entropy Function} gives the 
relation $2q \geq u^2 \geq 0$ which implies that the quantity under the square 
root is non-negative. Thus, $8q-4u^2+1>0$ and the eigenvalues are real and 
distinct so that the system is strictly hyperbolic.
The right eigenvectors in this case are given by 
\begin{equation}
\begin{split}
r_- = 
\begin{pmatrix}
1 \\ u-\frac{1}{2} - \sqrt{2q-u^2+\frac{1}{4}}
\end{pmatrix},\\
r_+ = 
\begin{pmatrix}
1 \\ u-\frac{1}{2} + \sqrt{2q-u^2+\frac{1}{4}}
\end{pmatrix}.
\end{split}
\end{equation}
It can be verified easily that these eigenvectors are linearly independent 
and span the $(u,q)$-plane.
The associated characteristic fields 
\begin{equation}
\nabla\lambda_-\cdot r_- = 2+\frac{1}{\sqrt{8q-4u^2+1}} \ ,
\end{equation}
\begin{equation}
\nabla\lambda_+\cdot r_+ = 2-\frac{1}{\sqrt{8q-4u^2+1}} \ ,
\end{equation}
are genuinely nonlinear and admit both shock and rarefaction waves.
For a shock profile connecting a constant left state $(u,q)=(u_L,q_L)$ to a constant right 
state $(u,q)=(u_R,q_R)$, the Rankine-Hugoniot jump conditions for \eqref{Transformed system} are 
\begin{align}\label{Rankine-Hugoniot}
c(u_L-u_R)&=~(q_L-q_R),\\
\label{Rankine-Hugoniot1}
c(q_L-q_R)&=\big( (2u_L-1)q_L+\Sfrac{u_L^2}{2}-\Sfrac{2u_L^3}{3} - 
(2u_R-1)q_R-\Sfrac{u_R^2}{2}+\Sfrac{2u_R^3}{3} \big),
\end{align}
where $c$ is the shock speed. We want the speed 
in \eqref{Rankine-Hugoniot}, \eqref{Rankine-Hugoniot1} 
to satisfy the Lax admissibility condition
\begin{equation}
\label{Lax}
\lambda_\mp(u_L,q_L)\geq c \geq \lambda_\mp(u_R,q_R).
\end{equation}
To determine the set of all states that can be connected to a 
fixed left state $(u_L,q_l)$, we eliminate the shock speed, $c$, from the above equations 
to obtain the shock curves
\begin{small}
\begin{align*}
&(q_R)_{1,2}=\frac{2q_L-(u_L-u_R)(2u_R-1)}{2} \ \ \pm \ \ \ \ \ \ \ \ \ \ \ \ \ \ \ \ \ \ \ \ \ \ \ \ \ \ \ \ \ \ \ \ \ \ \ \ \ \ \ \ \ \ \ \ \ \ \ \ \ \ \\ 
 &\frac{\sqrt{[-2q_L+(u_L-u_R)(2u_R-1)]^2+4\left[(u_L-u_R)\left((2u_L-1)q_L+\frac{u_L^2}{2}-
\frac{u_R^2}{2}-\frac{2u_L^3}{3}+\frac{2u_R^3}{3}\right)-q_L^2\right]}}{2}.
\end{align*}
\end{small}
After basic algebraic manipulations, we obtain
\begin{align}\label{SW}
\nonumber
(q_R)_{1,2}= q_L&-\frac{1}{2}(u_L-u_R)(2u_R-1)\\ 
&\pm \mid u_L-u_R\mid\sqrt{2q_L + \Sfrac{1}{4} + \Sfrac{1}{2} (u_L-u_R)-\Sfrac{1}{3} \big( 2u_L^2 +2u_Lu_R-u_R^2\big)}
\end{align}
From here and \eqref{Lax}, by considering $(u_R,q_R)$ in a small neighborhood of $(u_L,q_L)$, we conclude that the shock wave of the first family (SW1), the shock wave of the second family (SW2), the rarefaction wave of the first family (RW1) and the rarefaction wave of the second family (RW2) are given as follows:
\begin{figure}
  \begin{center}
     {\includegraphics[scale=0.5]{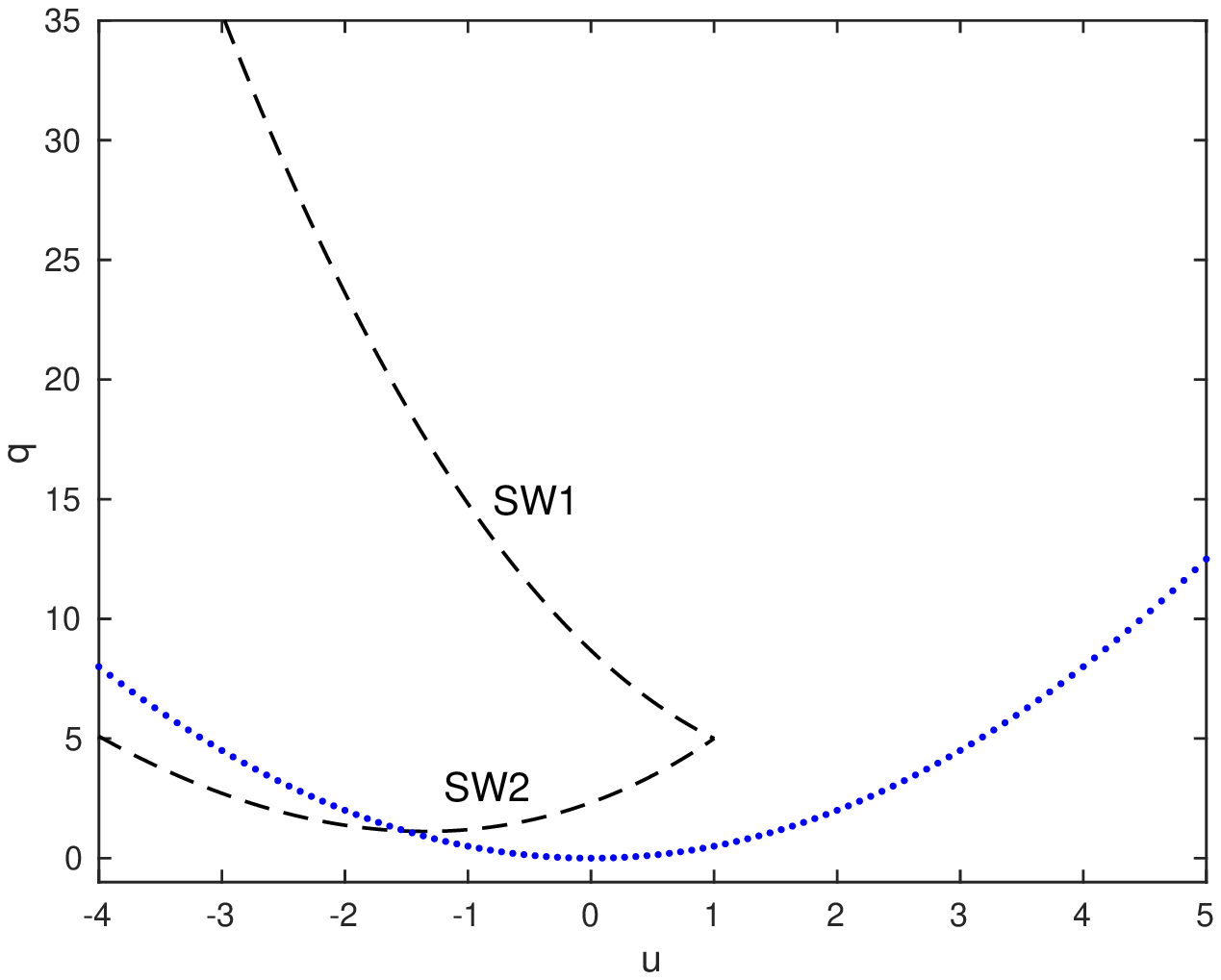}}~~
     {\includegraphics[scale=0.5]{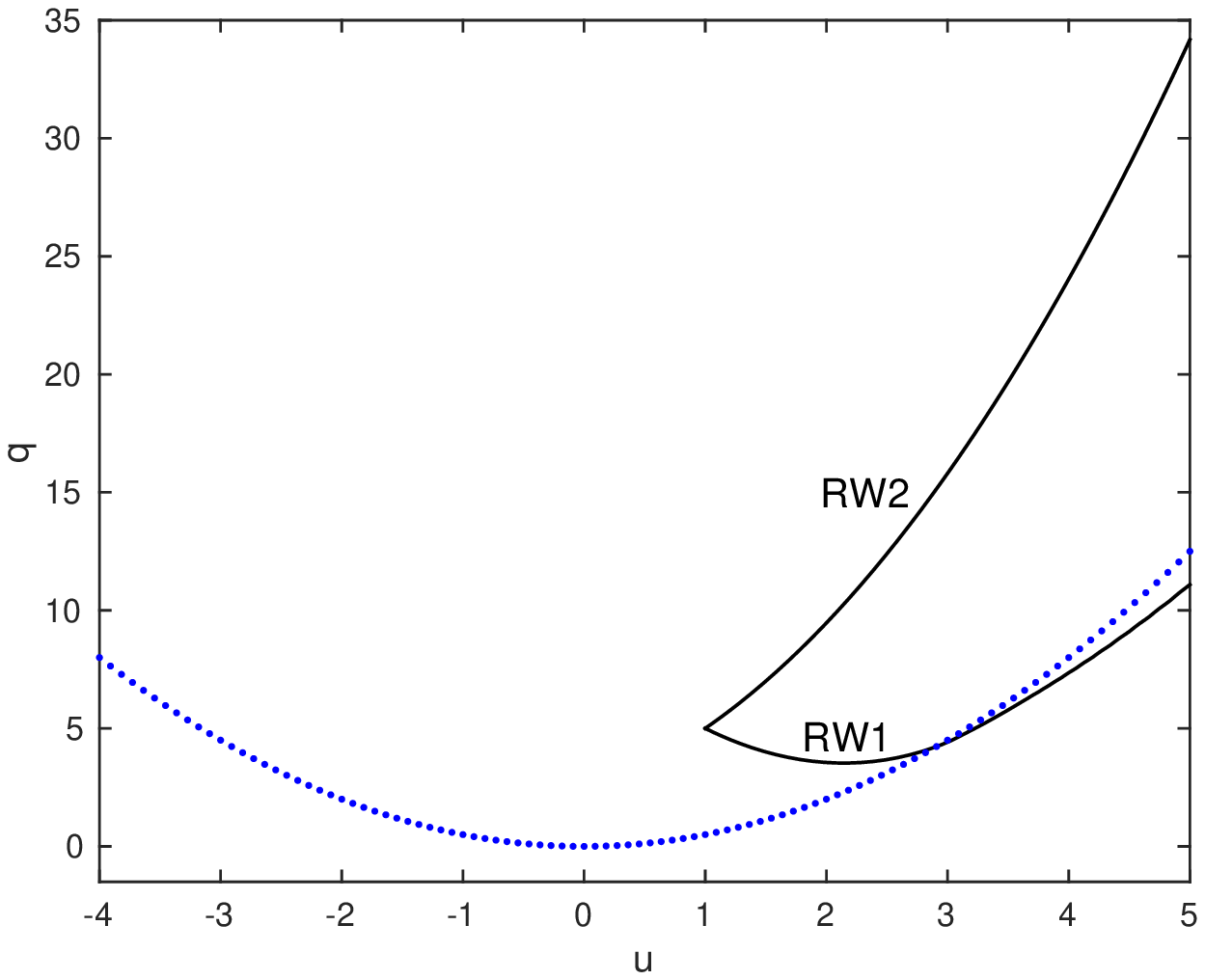}}
     (a)~~~~~~~~~~~~~~~~~~~~~~~~~~~~~~~~~~~~~~~~~~~~~~~~~~~~~~~~(b)
  \end{center}
  \caption{\small (a) Shock wave curves of the first and the second families at the left state $(u_L,q_L)=(1,5)$. 
                  (SW1) is indicated by the upper curve, while (SW2) is the lower curve. 
                  The blue dotted curve shows the limiting curve $q=u^2/2$.
                  (b) Rarefaction wave curves of the first and the second families at the left state $(u_L,q_L)=(1,5)$.
                  The (RW1) is indicated by the lower curve while (RW2) is the upper curve.}
\label{Fig1}
\end{figure}
\begin{flalign}\label{SW1}
& \mathrm{(SW1)} \ \
\nonumber
& q_R=q_L -\frac{1}{2}\big(u_L-u_R\big)\big(2u_R-1\big) 
\qquad \qquad \qquad \qquad \qquad \qquad \qquad \qquad \quad \
\\ 
& & + \mid u_L-u_R\mid \Big( 2q_L + \Sfrac{1}{2}(u_L-u_R)-\Sfrac{1}{3} \big(2u_L^2 +2u_Lu_R-u_R^2 \big) 
+ \Sfrac{1}{4} \Big)^{\frac{1}{2}}, \ \
\end{flalign}
for $u_R<u_L$. To verify that this indeed is the shock wave of the first 
family, we obtain from \eqref{Rankine-Hugoniot} and \eqref{Lax} that 
\begin{equation*}
\lambda_-(u_L,q_L)\geq c=\frac{2u_R-1-\sqrt{8q_L+1+\frac{4u_R^2}{3}-\frac{8u_Lu_R}{3}-\frac{8u_L^2}{3}-2u_R+2u_L}}{2}.
\end{equation*}
 Taking into account the form of $\lambda_-$, we conclude from the above equation that
\begin{equation*}
2(u_L-u_R)\geq \sqrt{8q_L+1-4u_L^2} -\sqrt{8q_L+1+\frac{4u_R^2}{3}-\frac{8u_Lu_R}{3}-\frac{8u_L^2}{3}-2u_R+2u_L}.
\end{equation*}
Further simplification leads to
\begin{equation*}
2 \geq \frac{-\frac{4}{3}(u_L-u_R)-2}{\sqrt{8q_L+1-4u_L^2} +\sqrt{8q_L+1+\frac{4u_R^2}{3}-\frac{8u_Lu_R}{3}-\frac{8u_L^2}{3}-2u_R+2u_L}},
\end{equation*}
which is obviously correct. In a similar way, the second part of the Lax condition,
\begin{equation*}
\lambda_-(u_R,q_R)\leq c,
\end{equation*}
can be verified. Moreover, it is trivial to verify the additional inequality $\lambda_+(u_R,q_R) \geq c$,
so that we have three characteristic curves entering the shock trajectory, and one characteristic curve
leaving the shock.
%
\begin{flalign}
& \mathrm{(SW2)} \ \
\nonumber
& q_R=q_L-\frac{1}{2}\big(u_L-u_R\big)\big(2u_R-1\big)
\qquad \qquad \qquad \qquad \qquad \qquad \qquad \qquad \quad  \ \\ 
& & - \mid u_L-u_R\mid \Big( 2q_L + \Sfrac{1}{2} (u_L-u_R) - \Sfrac{1}{3}\big( 2u_L^2 +2u_Lu_R-u_R^2 \big) +
 \Sfrac{1}{4} \Big)^{\frac{1}{2}}, \ \
\end{flalign}
for $u_R<u_L$. We will skip the proof since it is the same as in the case of (SW1).
Next, we have the rarefaction curves. \\

\noindent
(RW1), \ Using the method from \cite[Theorem 7.6.5]{Daf}, this wave can be written as
\begin{equation}
\label{RW1}
\frac{dq}{du}=\frac{2u-1-\sqrt{8q-4u^2+1}}{2}=\lambda_-(u,q), \ \ \ \ \ q(u_L)=q_L,
\end{equation} 
for $u_R>u_L$. 
Clearly, for $u_R<u_L$ we cannot have (RW1) since in that domain, 
states are connected by (SW1) (see (SW1) above). 
In order to prove that \eqref{RW1} indeed provides RW1, 
we need to show that 
\begin{equation}
\label{check-1}
\lambda_-(u_L,q_L)<\lambda_-(u_R,q_R) \ \ {\rm if} \ \ u_R>u_L.
\end{equation} 
Introducing the change of variables $\tilde{q}=8q-4u^2+1$ in \eqref{RW1}, we can rewrite it in the form
$$
\frac{d\tilde{q}}{du}=-4(1+\sqrt{\tilde{q}})<0.
$$ 
From here, we see that $\tilde{q}$ is decreasing with respect to $u$ and thus, 
for $u_L<u_R$, we must have
$$
8q_L-4u_L^2+1=\tilde{q}_L>\tilde{q}_R=8q_R-4u_R^2+1.
$$ 
This, together with $u_L<u_R$ immediately implies \eqref{check-1}. \\

\noindent
(RW2) Using again \cite[Theorem 7.6.5]{Daf}), we have
\begin{equation}
\label{RW2}
\frac{dq}{du}=\frac{2u-1+\sqrt{8q-4u^2+1}}{2}=\lambda_+(u,q), \ \ q(u_L)=q_L,
\end{equation} 
for $u_R>u_L$. 
It can be shown that \eqref{RW2} gives the rarefaction wave (RW2) in the same way 
explained above for (RW1).
The wave fan issuing from the left state $(u_L,q_L)$ 
and the inverse wave fan issuing from the right state $(u_R,q_R)$ 
are given in Figure \ref{Fig2}(a) and Figure \ref{Fig2}(b), respectively.
\begin{figure}
  \begin{center}
     {\includegraphics[scale=0.5]{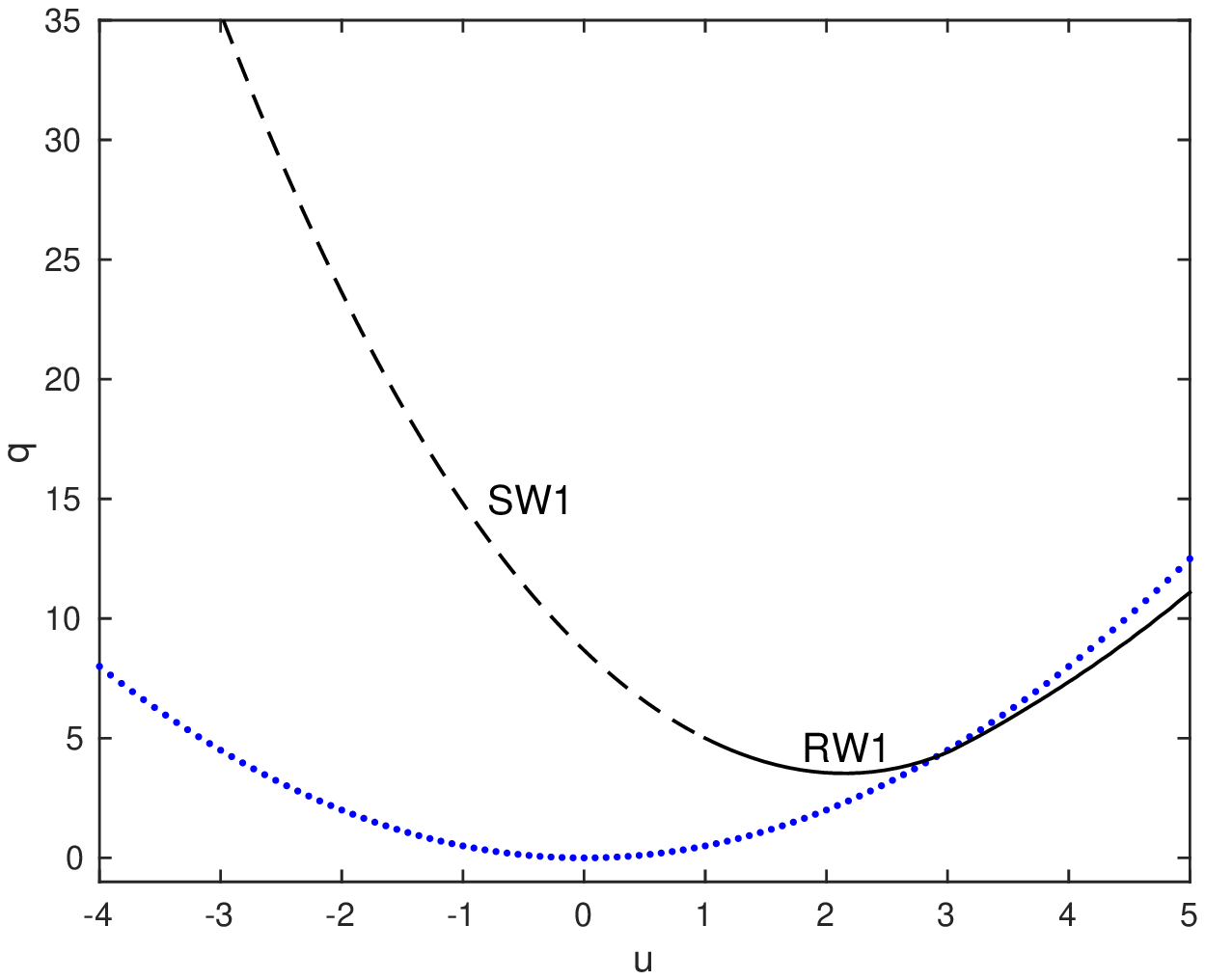}}~~
     {\includegraphics[scale=0.5]{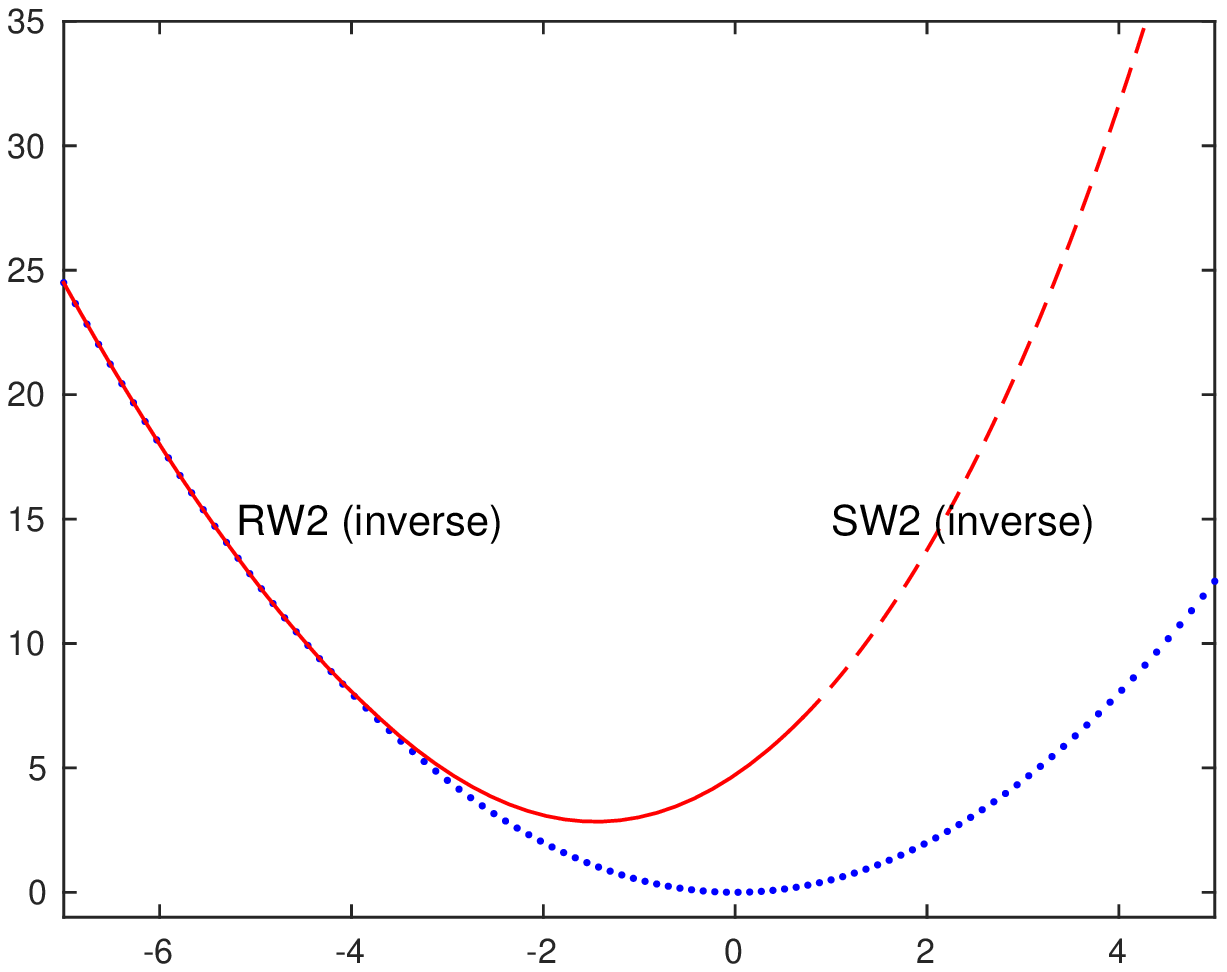}}
     (a)~~~~~~~~~~~~~~~~~~~~~~~~~~~~~~~~~~~~~~~~~~~~~~~~~~~~~~~~(b)
  \end{center}
  \caption{\small Shock and rarefaction wave curves of the first and the second families: 
                        (a) shows SW1 (dashed) and RW1 (solid) at the left state $(u_L,q_L)=(1,5)$.
                        (b) shows inverse SW2 (dashed, red) and inverse RW2 (solid, red) 
                        at the right state $(u_R,q_R)=(0.7,7)$.}
\label{Fig2}
\end{figure}

We next aim to prove existence of solution for arbitrary Riemann initial data 
without necessarily assuming a small enough initial jump. 
The only essential hypothesis is that both left and right states are above the critical curve $q_{crit}= u^2/2$:
\begin{equation}
\label{assumpt}
q_L\geq u_L^2/2, \ \ \ \ q_R \geq u_R^2/2.
\end{equation} 
This assumptions is of course natural given the change of variables $q=\frac{u^2+v^2}{2}$. 
Nevertheless, this condition makes complicates our task 
since is also needs to be shown that the Lax admissible solution 
to a Riemann problem remains in the area $q\geq u^2/2$. 
To this end, the following lemma will be useful.
\begin{lem}
\label{L1}
The function $q_{crit}(u)=\frac{u^2}{2}$ satisfies \eqref{RW2}.
\end{lem}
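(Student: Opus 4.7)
The plan is to verify the ODE \eqref{RW2} directly by substituting the candidate function $q_{crit}(u)=u^2/2$ into both sides and checking the resulting identity. This is really just a one-line computation, so the only work is organizing it clearly.

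First I would compute the discriminant inside the square root: if $q = u^2/2$, then
$$
8q - 4u^2 + 1 = 4u^2 - 4u^2 + 1 = 1,
$$
so $\sqrt{8q-4u^2+1} = 1$ along the critical curve. Substituting this into $\lambda_+(u,q)$ gives
$$
\lambda_+(u, u^2/2) = \frac{2u - 1 + 1}{2} = u.
$$
On the other hand, differentiating the candidate $q_{crit}(u) = u^2/2$ yields $dq_{crit}/du = u$, which matches. The initial condition $q(u_L) = q_L$ is compatible because the statement only claims that $q_{crit}$ satisfies the ODE, not that it passes through an arbitrary prescribed point.

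There is really no obstacle here; the lemma is a direct computation. Its interest lies in what it implies for the subsequent analysis, namely that the critical parabola $q = u^2/2$ is itself an integral curve of the second characteristic field, so that an (RW2) starting from a point on this curve remains on it, and (as will presumably be used in the sequel) an (RW2) starting strictly above the curve cannot cross it. This monotone-separation property is the real content one would invoke later when verifying that the Lax admissible solution stays in the physical region $q \geq u^2/2$.
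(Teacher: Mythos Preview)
Your proposal is correct and is exactly the direct verification the paper has in mind: the paper simply writes ``The proof is obvious and we omit it,'' and your substitution $8q-4u^2+1=1$, $\lambda_+(u,u^2/2)=u=dq_{crit}/du$ is precisely that obvious check. Your remark about the initial condition and the subsequent use of the lemma (that the critical parabola is an integral curve of the second family, so RW2 curves starting above it cannot cross it) also matches the paper's intended application.
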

\begin{proof}
The proof is obvious and we omit it.
\end{proof} 
The above lemma is important since, according to the uniqueness of solutions 
to the Cauchy problem for ordinary differential equations, 
it shows that if the left and right states $(u_L,q_L)$ and $(u_R,q_R)$ 
are above the curve $q_{crit}(u)=\frac{u^2}{2}$, 
then the simple waves (SW1, SW2, RW1, RW2) connecting the states will remain above it 
which means that we can use the solution to \eqref{Transformed system} 
to obtain a solutions of \eqref{Brio System} 
since the square root giving the function $v=\sqrt{2q-u^2}$ will be well defined. 
Concerning the Riemann problem, we have the following theorem.
\begin{thm}
\label{transf-thm}
Given a left state $(u_L,q_L)$ and a right state $(u_R,q_R)$, so that both are above the critical curve 
$q_{crit}(u)=\frac{u^2}{2}$ i.e. we have $q_L\geq u_L^2/2$ and $q_R\geq u_R^2/2$,
the states $(u_L,q_L)$ and $(u_R,q_R)$ can be connected 
Lax admissible shocks and rarefaction waves via a middle state belonging to the domain $q>u^2/2$.
\end{thm}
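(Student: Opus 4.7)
The plan is to construct the Riemann solution via the classical Lax wave-curve intersection. Through the left state $(u_L,q_L)$ I build the forward $1$-wave curve $\mathcal{W}_1$, formed by gluing the (SW1) branch (for $u<u_L$) to the (RW1) branch (for $u>u_L$); through the right state $(u_R,q_R)$ I build the backward $2$-wave curve $\mathcal{W}_2$, formed by gluing the backward (SW2) branch (for $u>u_R$) to the backward (RW2) branch (for $u<u_R$). Both curves are $C^1$ graphs $q=\Phi_i(u)$ in a neighborhood of their base points by the general Lax theory, using that \eqref{Transformed system} is strictly hyperbolic and genuinely nonlinear in the physical region. The middle state $(u_M,q_M)$ will be their intersection.

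The first step is to verify that $\mathcal{W}_2$ lies globally above the critical curve $q=u^2/2$. For the backward (RW2) branch this follows from Lemma~\ref{L1} combined with uniqueness for the associated ODE: the critical curve itself solves \eqref{RW2}, so no trajectory starting strictly above it can reach it. For the backward (SW2) branch, a direct algebraic manipulation of \eqref{SW} together with $q_R\geq u_R^2/2$ reduces the inequality $q-u^2/2\geq 0$ to a manifestly non-negative expression. Global extension in $u$ of both branches comes from the closed-form shock formula \eqref{SW} and from the substitution $\tilde q=8q-4u^2+1$ introduced after \eqref{RW1}, which turns the rarefaction equations into autonomous ODEs with globally defined solutions whenever $\tilde q>0$.

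Next I would locate a unique intersection of $\mathcal{W}_1$ and $\mathcal{W}_2$. Strict hyperbolicity supplies linearly independent right eigenvectors $r_\pm$, so at every point the tangent to $\mathcal{W}_1$ is transversal to that of $\mathcal{W}_2$; this transversality makes $\Phi_1-\Phi_2$ strictly monotone on each smooth branch. A comparison at $u\to-\infty$ (the (SW1) branch of $\mathcal{W}_1$ grows quadratically at a rate strictly larger than the critical parabola $u^2/2$, while the backward (RW2) branch of $\mathcal{W}_2$ asymptotes to the critical parabola exponentially fast) produces a sign change of $\Phi_1-\Phi_2$, and hence a unique crossing $(u_M,q_M)$. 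Because this point lies on $\mathcal{W}_2$, the first step forces $q_M>u_M^2/2$, so the middle state is admissible.

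The main obstacle is controlling the (RW1) branch of $\mathcal{W}_1$, which does \emph{not} preserve the admissible region: a direct computation gives $\tfrac{d}{du}(q-u^2/2)=\lambda_-(u,q)-u\leq -1$ along (RW1) whenever $q\geq u^2/2$, so this branch exits $\{q\geq u^2/2\}$ after $u$ moves by at most $q_L-u_L^2/2$ to the right. Consequently $\mathcal{W}_1$ is a graph over $u$ only on an interval bounded above by $u_L+(q_L-u_L^2/2)$. The proof must be completed by checking that the intersection with $\mathcal{W}_2$ always falls inside this interval, most efficiently by splitting into the four classical configurations SW1+SW2, SW1+RW2, RW1+SW2, RW1+RW2 according to the location of $(u_R,q_R)$ relative to the four wave curves through $(u_L,q_L)$, and verifying admissibility and solvability in each case.
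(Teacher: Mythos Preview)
Your proposal follows essentially the same route as the paper: build the forward $1$-wave curve through $(u_L,q_L)$ and the backward $2$-wave curve through $(u_R,q_R)$, use Lemma~\ref{L1} to keep the backward (RW2) branch above $q_{crit}$, exploit the quadratic growth of (SW1) and the convergence of backward (RW2) to $q_{crit}$ as $u\to-\infty$ to force an intersection in the admissible region, and finish with the four-configuration split (regions I--IV). The paper's treatment of uniqueness is just as informal as your transversality remark, so the two arguments match in both strategy and level of detail.
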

%
\begin{figure}
  \begin{center}
    {\includegraphics[scale=0.5]{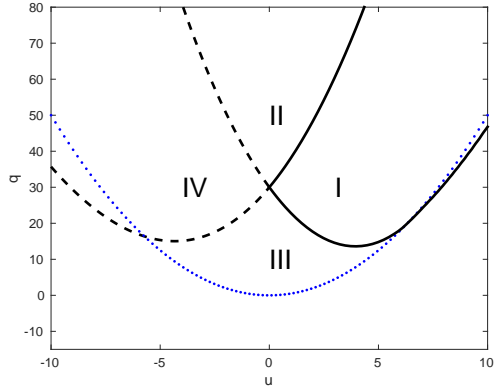}}
  \end{center}
  \caption{\small Admissible connections between a given left state $(u_L,q_L)$ and a right state 
                  can be classified into four regions in the phase plane.}
\label{Fig3}
\end{figure}
\begin{proof}
In order to find a connection between $(u_L,q_L)$ and $(u_R,q_R)$, we first draw the waves of the 
first family (SW1 and RW1) through $(u_L,q_L)$ and waves of the second family (SW2 and RW2) through $(u_R,q_R)$. 
The point of intersection will be the middle state through which we connect $(u_L,q_L)$ and $(u_R,q_R)$ 
(see Figure \ref{Fig4} for different dispositions of $(u_L,q_L)$ and $(u_R,q_R)$). 
In this case, the intersection point will be unique which can be seen by considering 
the four possible dispositions of the states $(u_L,q_L)$ and $(u_R,q_R)$ shown in Figure \ref{Fig4}:
\begin{itemize}

\item
For right states in region $I$: RW1 followed by RW2;

\item
For right states in region $II$: SW1 followed by RW2;

\item
For right states in region $III$: RW1 followed by SW2;

\item
For right states in region $IV$: SW1 followed by SW2;

\end{itemize} 
Properties of the curves of the first and second families are provided in a)-d) above. The growth properties give also existence as we shall show in detail in the sequel of the proof. 

Firstly, we remark that SW1 and RW1 emanating from $(u_L,q_L)$ cover the entire $q\geq u^2/2$ domain (see Figure \ref{Fig2}(a)). In other words, we have for the curve $q_R$ defining the SW1 by \eqref{SW1}:
\begin{equation*}
\lim\limits_{u_R\to - \infty} q(u_R)=\infty,
\end{equation*}
implying that the SW1 will take all $q$-values for $q_R> q_L$. More precisely, for every $q_R>q_L$ there exists $u_R<u_L$ such that $q_R(u_R)=q_R$ where $q_R$ is given by \eqref{SW1}.

%
\begin{figure}
  \begin{center}
     {\includegraphics[scale=0.45]{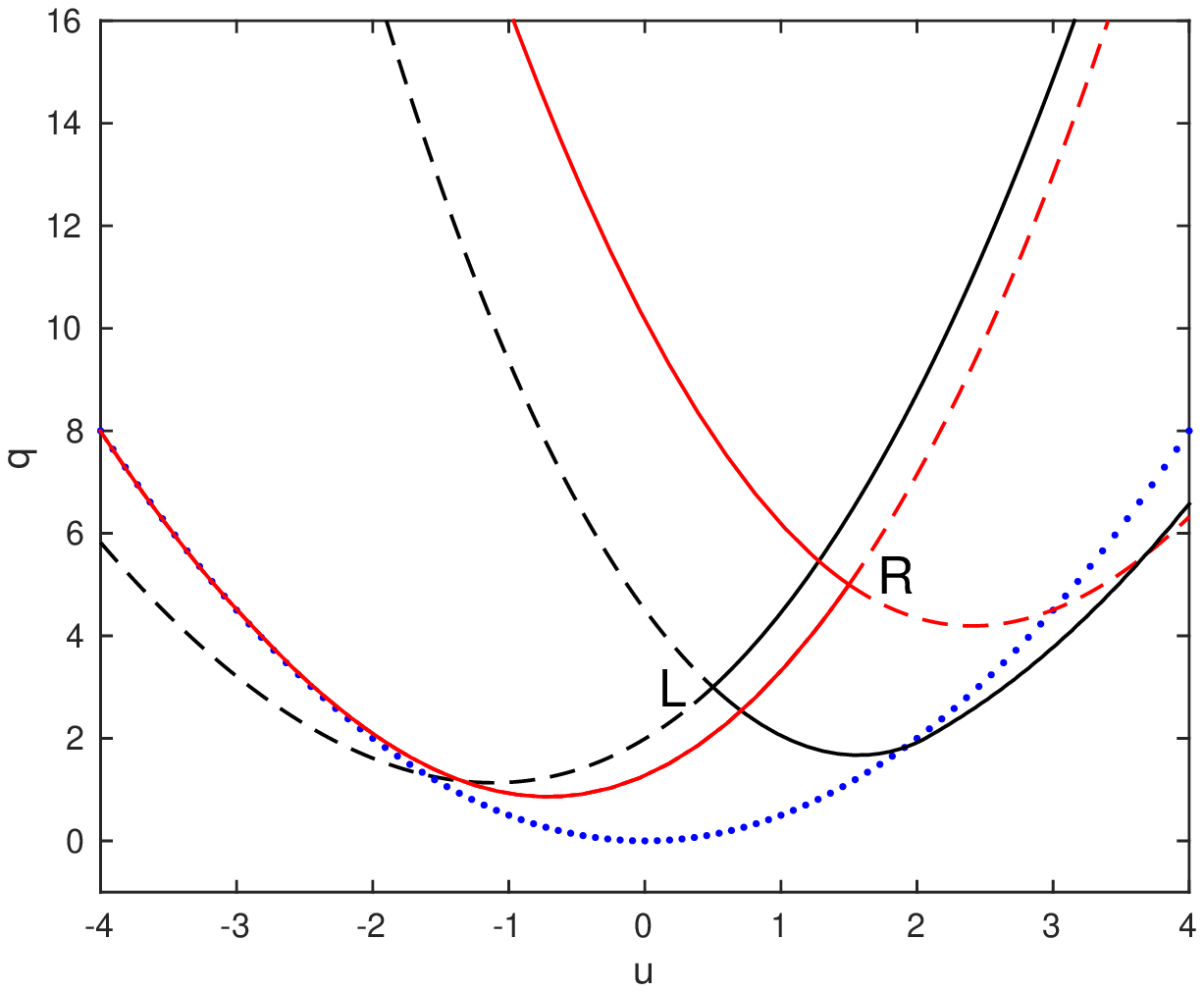}}~~
     {\includegraphics[scale=0.45]{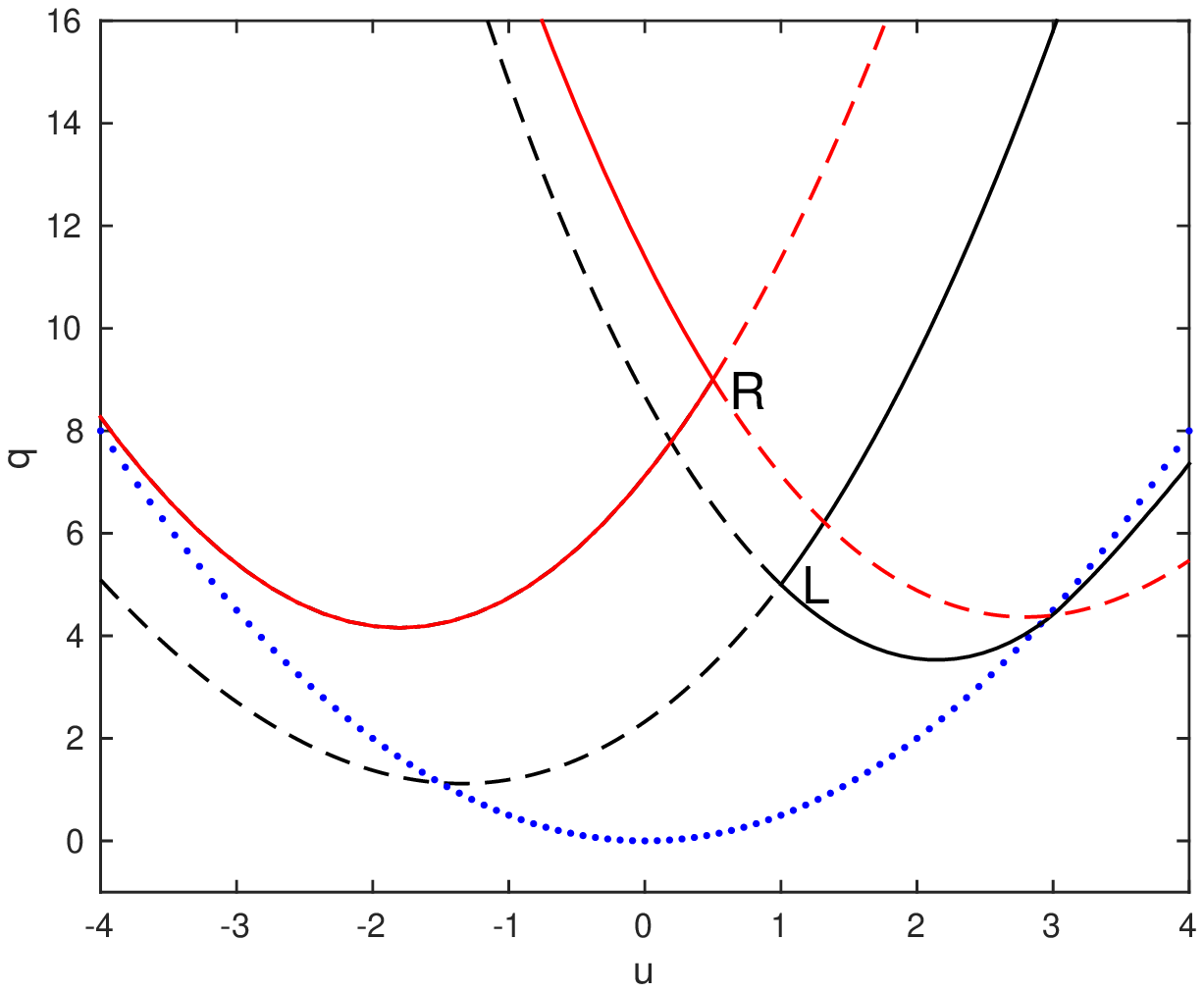}}\\
     (a)~~~~~~~~~~~~~~~~~~~~~~~~~~~~~~~~~~~~~~~~~~~~~~~~~~~~~~~~(b)
      {\includegraphics[scale=0.45]{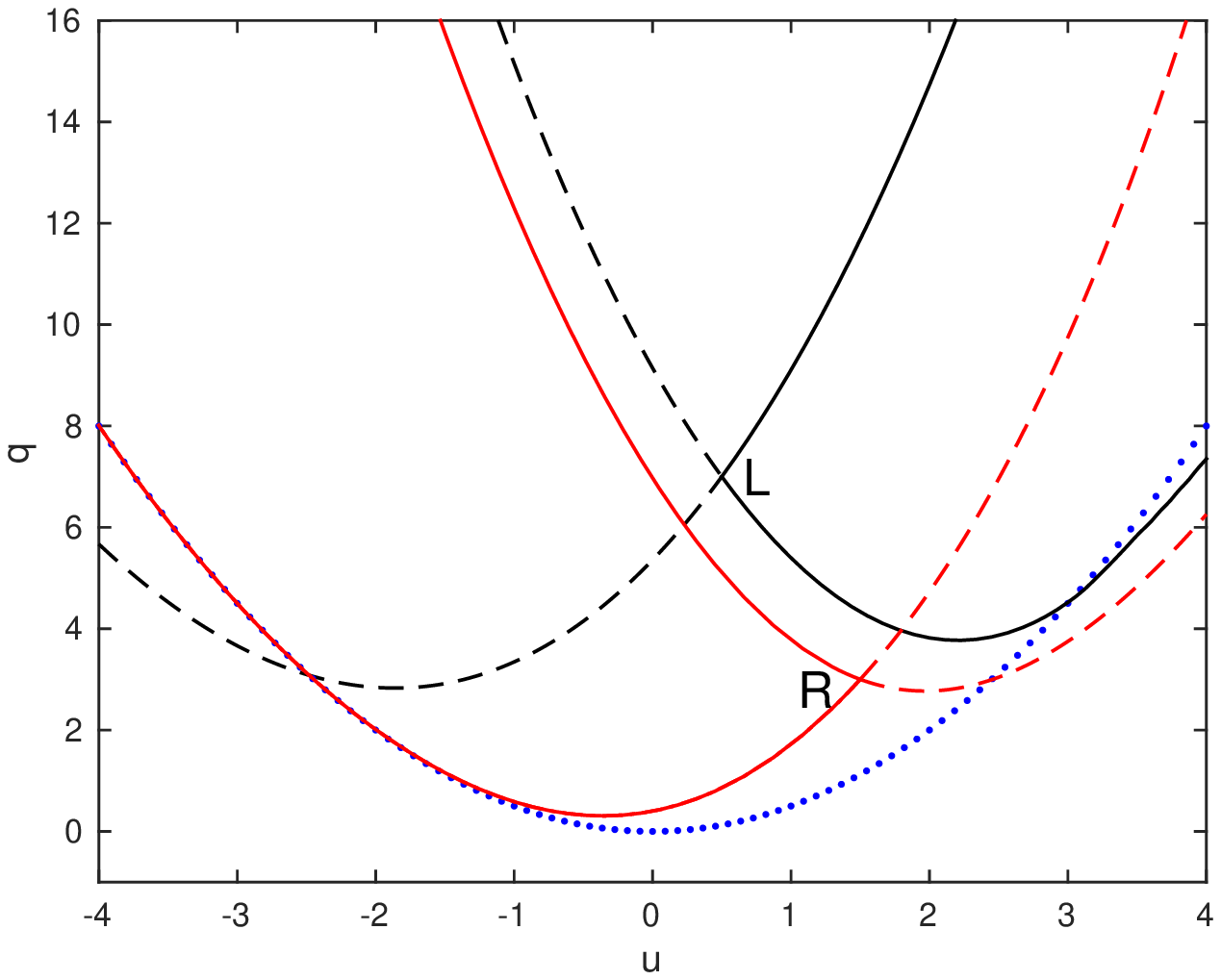}}~~
      {\includegraphics[scale=0.45]{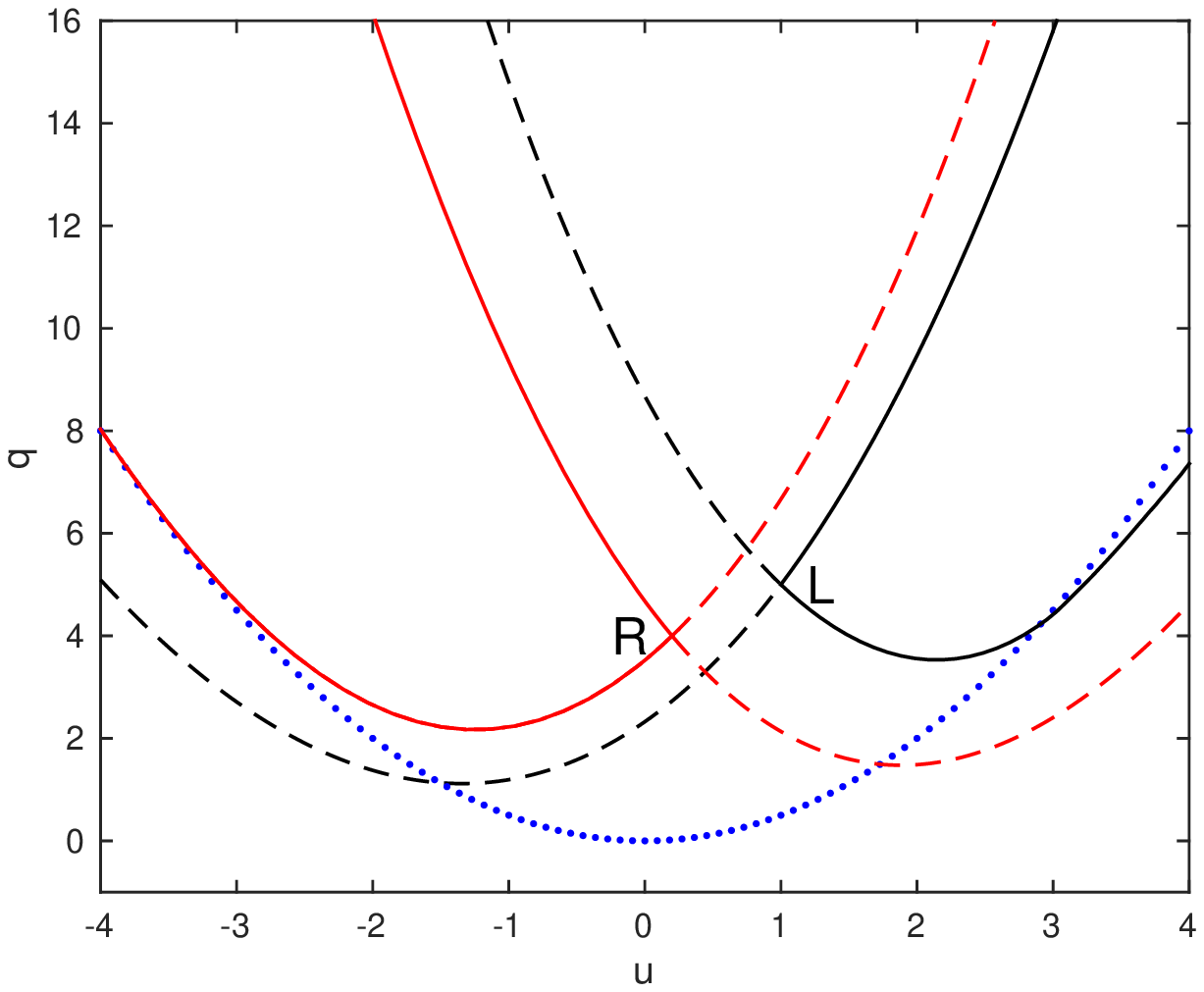}}\\
 
    (c)~~~~~~~~~~~~~~~~~~~~~~~~~~~~~~~~~~~~~~~~~~~~~~~~~~~~~~~~(d)
  \end{center}
    \caption{\small Shock and rarefaction wave curves of the first and the second families. 
                    At the left state $L=(u_L,q_L)$, the curves SW1 (dashed), SW2 (dashed), RW1 (solid), 
                    and RW2 (solid) are drawn in black. The inverse curves at the right state $R=(u_R,q_R)$ 
                    are indicated in red: SW1 (dashed), SW2 (dashed), RW1 (solid) and RW2 (solid).
                    Panel (a) shows the situation for region I,
                    Panel (b) shows the situation for region II,
                    Panel (c) shows the situation for region III and
                    Panel (d) shows the situation for region IV.
}
\label{Fig4}
\end{figure}

As for the RW1, it holds for $q$ given by \eqref{RW1} that 
\begin{equation*}
\frac{dq}{du}-u \leq -1 \implies \frac{dq}{du}\leq u-1,
\end{equation*}
which means that the RW1 curve emanating from any $(u_L,q_L)$ for which $q_L>u_L^2/2$ will intersect the curve $q_{crit}=\frac{u^2}{2}$ (since $\frac{d q_{crit}}{d u}=u>u-1 \geq \frac{dq}{du}$) at some $u_R>u_L$ as shown in Figure \ref{Fig1}, b). 

Now, we turn to the waves of the second family. Let us fix the right state $(u_R,q_R)$. We need to compute the inverse waves (i.e. for the given right state, we need to compute curves consisting of appropriate left states (see Figure \ref{Fig2}(b)). The inverse rarefaction curve of the second family is given by the equation \eqref{RW2}, but we need to take values for $u_R<u_L$ (opposite to the ones given in \eqref{RW2}). As for the inverse SW2, we compute from \eqref{Rankine-Hugoniot} and \eqref{Rankine-Hugoniot1} the value $q_L$:

\begin{align}\label{SW2-inv}
\nonumber
q_L= q_R&-\frac{1}{2} \big(u_L-u_R\big)\big(2u_L-1\big)\\ 
& + \frac{(u_L-u_R)}{2}\sqrt{8q_R+1+\Sfrac{4u_L^2}{3}-\Sfrac{8u_L u_R}{3}-\Sfrac{8u_R^2}{3}-2u_L+2u_R},
\end{align}
for $u_R<u_L$.
Clearly, the RW2 cannot intersect the critical line $q_{crit}=\frac{u^2}{2}$ since $q_{crit}$ satisfy \eqref{RW2} (see Lemma \ref{L1}) and the intersection would contradict uniqueness of solution to the Cauchy problem for \eqref{RW2}. However, a solution to \eqref{RW2} with the initial conditions $q(u_R)=q_R>u_R^2/2$  will converge toward the line $q_{crit}=u^2/2$ since for $q$ given by \eqref{RW2} we have
\begin{equation*}
\frac{dq}{du}-u \geq 0 \ \ {\rm and} \ \ \frac{dq}{du}\Big|_{(u,u^2/2)}-u=0,
\end{equation*}
implying that $q$ will decrease toward $q_{crit}=u^2/2$ and that they will merge as $u_L\to -\infty$ (see Figure \ref{Fig2}(b)). As for the inverse SW2 given by \eqref{SW2-inv}, we see that
\begin{equation*}
\lim\limits_{u_L\to \infty} q(u_L)=\infty,
\end{equation*}
which eventually imply that the 1-wave family emanating from $(u_L,q_L)$ must intersect with the inverse 2-wave family emanating from $(u_R,q_R)$ somewhere in the domain $q>u^2/2$ (see Figure \ref{Fig4} 
for several dispositions of the left and right states).

Finally, we remark that according to the previous analysis, it follows that the intersection between curves of the first and the second family is unique. \end{proof}

\section{Admissibility conditions for $\delta$-shock wave solution to the original Brio System}

Our starting point is that the system original Brio system \eqref{Brio System}
is based on conservation of quantities which are not necessarily physically conserved,
and that the transformed system \eqref{Transformed system} is a closer representation 
of the physical phenomenon to be described. The second principle is that $\delta$-distribution 
represents actually a defect in the model and thus, 
it should be necessarily present as a part of non-regular solutions to \eqref{Brio System}. 
Moreover, the regular part of a solution to \eqref{Brio System} 
should be an admissible solution to \eqref{Transformed system}. 
Having these requirements in mind, we are able to introduce admissibility conditions 
for a $\delta$-type solution to \eqref{Brio System}.

Let us first recall the characteristic speeds for \eqref{Brio System}. Following the \cite{HL}, we see immediately that
\begin{equation}
\label{lambdaB}
\lambda_1(u,v)=u-1/2-\sqrt{v^2+1/4}, \ \ \lambda_2(u,v)=u-1/2+\sqrt{v^2+1/4}.
\end{equation} The shock speed for \eqref{Brio System} for the shock determined by the left state $(U_L,V_L)$ and the right state $(U_R,V_R)$ is given by
\begin{equation}
\label{shockB}
s=\frac{U_L+U_R}{2}+\frac{V_L^2-V_R^2}{2(U_L-U_R)}.
\end{equation}

Now, we can formulate admissibility conditions for $\delta$-type solution to \eqref{Brio System} in the sense of Definition \ref{def-gen}. We shall require that the real part of $\delta$-type solution to \eqref{Brio System} satisfy the energy-velocity conservation system \eqref{Transformed system} and that the number of $\delta$-distributions appearing as part of the solution to \eqref{Brio System} is minimal. 

\begin{df}
\label{admissibility} We say that the pair of distributions
$u=U+\alpha(x,t)\delta(\Gamma)$ and $v=V+\beta(x,t)\delta(\Gamma)$ satisfying Definition \ref{def-gen} with $f(u,v)=\frac{u^2+v^2}{2}$ and $g(u,v)=v(u-1)$ is an admissible $\delta$-type solution to \eqref{Brio System}, \eqref{riemann} if

\begin{itemize}

\item The regular parts of the distributions $u$ and $v$ are such that the functions $U$ and $q=(U^2+V^2)/2$ represent Lax-admissible solutions to \eqref{Transformed system} with the initial data 
\begin{equation}
\label{Triemann}
u|_{t=0}=U_0, \ \ q|_{t=0}= q_0=(U_0^2+V_0^2)/2.
\end{equation}

\item For every $t\geq 0$, the support of the $\delta$-distributions appearing in $u$ and $v$ is of minimal cardinality.

\end{itemize} 
\end{df} To be more precise, the second requirement in the last definition means that the admissible solution will have ``less" $\delta$-distributions as summands in the $\delta$-type solution  than any other $\delta$-type solution to \eqref{Brio System}, \eqref{riemann}. We have the following theorem:

\begin{thm}
There exists a unique admissible  $\delta$-type solution to \eqref{Brio System}, \eqref{riemann}.
\end{thm}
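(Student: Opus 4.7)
The plan is to reduce the problem to the Riemann problem for the transformed system \eqref{Transformed system} and then read off the singular part of the solution from the Rankine--Hugoniot defect of the second Brio equation. First I would apply Theorem \ref{transf-thm} with data $(U_0, q_0 = (U_0^2+V_0^2)/2)$ to obtain the unique Lax-admissible solution $(U,q)$ of \eqref{Transformed system}. By Theorem \ref{transf-thm} together with Lemma \ref{L1}, this solution remains in the region $q \ge u^2/2$, so that $V := \pm\sqrt{2q-U^2}$ is well defined; the sign is fixed by matching $V_L$ and $V_R$ through any adjacent rarefaction fan and propagated by continuity to the intermediate state.

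Next I would show that $(U,V)$ already solves the first equation of \eqref{Brio System} in the ordinary weak sense, so that no $\delta$-contribution to $u$ is required. In smooth regions the two systems are equivalent; across any shock of \eqref{Transformed system} with speed $c$, the transformed Rankine--Hugoniot condition \eqref{Rankine-Hugoniot} reads $c[U] = [q]$, which is identical to the first Brio Rankine--Hugoniot relation $c[U] = [(U^2+V^2)/2]$ once $q = (U^2+V^2)/2$ is substituted. Hence identity \eqref{m1-g1} holds with $\alpha \equiv 0$.

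For the second Brio equation, the defect $c[V] - [V(U-1)]$ is generically nonzero across each shock of \eqref{Transformed system}, and this is precisely the configuration handled by Theorem \ref{thm-cnl}(a). Setting $\beta_i(t) = (c_i[V] - [V(U-1)])t$ on each shock curve $\gamma_i$ (of which there are at most two) and $\alpha \equiv 0$ elsewhere, the pair of distributions in \eqref{delta-sol} satisfies \eqref{m2-g1} by the same computation as in the proof of Theorem \ref{thm-cnl}. Because across rarefaction fans the solution is smooth and satisfies \eqref{Brio System} classically, no further singular mass is needed there. This produces an admissible $\delta$-type solution, establishing existence.

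For uniqueness, the first bullet of Definition \ref{admissibility} forces the regular parts of any admissible solution to coincide with the unique Lax solution of \eqref{Transformed system}, pinning down $U$ and $V$. Since the first Brio equation is already satisfied without a singular contribution in $u$, the minimality requirement forces $\alpha \equiv 0$; the $\beta_i$ on each shock are then uniquely determined by \eqref{m2-g1}, which is linear in $\beta_i$. I expect the main obstacle to be precisely this minimality argument: Proposition \ref{non-unique} shows that cancelling pairs of Diracs can always be superimposed on any given solution, so one must rule out all such redundant configurations, including ones supported inside rarefaction fans or splitting a single shock defect across several Dirac supports. The key point will be to argue that any additional Dirac must introduce a new point in the support without eliminating one elsewhere, and therefore cannot produce a configuration of strictly smaller cardinality than the single-Dirac-per-shock arrangement built above.
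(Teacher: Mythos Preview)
Your approach is broadly the same as the paper's---solve the transformed system, pull back, and compute the Rankine--Hugoniot defects in the second Brio equation---but there are two genuine gaps.

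First, your existence construction does not handle the case $V_L>0$, $V_R<0$ (or vice versa). You write that the sign of $V=\pm\sqrt{2q-U^2}$ is ``fixed by matching $V_L$ and $V_R$ through any adjacent rarefaction fan and propagated by continuity to the intermediate state.'' But when $V_L$ and $V_R$ have opposite signs, no continuous choice of sign connects them: one must insert an additional shock at the middle state $(U_M,q_M)$, connecting $(U_M,+\sqrt{2q_M-U_M^2})$ to $(U_M,-\sqrt{2q_M-U_M^2})$, with speed $s=U_M$ dictated by the first Brio Rankine--Hugoniot relation. The paper then verifies that this speed lies between the speeds of the adjacent elementary waves (e.g.\ $\lambda_-(U_M,V_M)\le U_M\le \lambda_+(U_M,V_M)$ in the two-rarefaction case, and $c_1\le U_M\le c_2$ in the two-shock case), so that the resulting wave fan is geometrically consistent. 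This is precisely the configuration identified in \cite{HL} as having no classical Lax solution, and it is an essential case you have omitted.

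Second, your uniqueness argument overstates what the first bullet of Definition~\ref{admissibility} pins down. It fixes $U$ and $q=(U^2+V^2)/2$, not $V$ itself; the sign of $V$ is free at every point. The serious competitors to the constructed solution are therefore not the cancelling Dirac pairs of Proposition~\ref{non-unique}, but configurations in which $V$ flips sign along one or more curves inside the wave fan (each such flip is a genuine discontinuity of $V$ with $[U]=0$). The paper rules these out by noting that each extra sign-flip would either require an additional $\delta$-contribution (violating minimality), or, if its Rankine--Hugoniot conditions are to be satisfied without defect, would force the flip speed to equal $U_{M_k}$, which is incompatible with the speeds of the adjacent waves. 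You gesture at the minimality obstacle but do not identify this sign-flip mechanism, which is the actual source of non-uniqueness here.
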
 
\begin{proof}
We divide the proof into two cases:

In the first case, we consider initial data such that both left and right states 
of the function $V_0$ have the same sign. In the second case, we consider the initial data where left and right states 
of the function $V_0$ have the opposite sign. 

In the first case, we first solve \eqref{Transformed system} with the initial data $U_0$ and $q_0=(U_0^2+V_0^2)/2$. 
According to Theorem \ref{transf-thm}, there exists a unique Lax admissible solution 
to the problem denoted by $(U,q)$. Using this solution, we define $V=\sqrt{2q-U^2}$ 
if the sign of $V_0$ is positive and  $V=-\sqrt{2q-U^2}$ if the sign of $V_0$ is negative.

To compute $\alpha$ and $\beta$ in \eqref{delta-sol}, we compute the Rankine-Hugoniot deficit 
if it exists at all. According to Theorem \ref{transf-thm} 
there are four possibilities.

\begin{figure}
  \begin{center}
     {\includegraphics[scale=0.8]{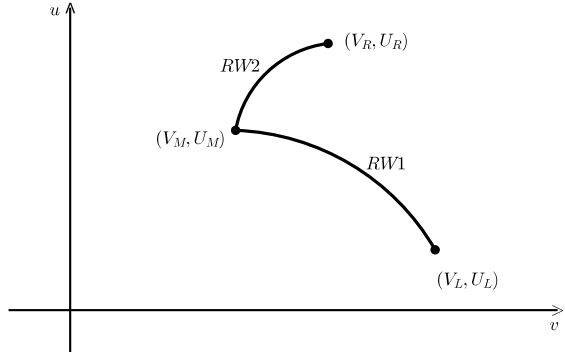}}
  \end{center}
  \caption{\small Admissible connection between rarefaction wave curves of the first and second families}
\label{Fig5}
\end{figure}

\begin{itemize}

\item Region $I$:
The states $(U_L,q_L)$ and $(U_R,q_R)$ are connected by a combination of RW1 and RW2 
via the state $(U_M,q_M)$. In this situation, we do not have any Rankine-Hugoniot deficit 
since the solution $(u,q)$ to \eqref{Transformed system} is continuous. 
Thus, we simply write $(u,v)=(u,\sqrt{2q-u^2})$ 
and this is the solution to \eqref{Brio System}, \eqref{riemann}.  
The solution is plotted in Figure \ref{Fig5}.

As for the uniqueness, we know that the function $u$ is unique 
since it is the Lax admissible solution to \eqref{Transformed system} 
with the initial data \eqref{Triemann}. 
The function $v$ is determined by the unique functions $u$ and $q$ via 
$$
v=\pm \sqrt{2q-u^2}.
$$ 
Thus, $v$ could change sign so that we connect $V_L$ by $V_{M1}$ 
and then skip to $-V_{M1}$ on $v=-\sqrt{2q-u^2}$ and then connect it by $-V_{M2}$. 
From here we connect to $V_{M2}$ located on the original curve $v=\sqrt{2q-u^2}$ 
and then connect $V_{M2}$ to $V_M$. Finally, we connect $V_M$ with $V_R$. 
The procedure is illustrated in Figure \ref{Fig6}. 
However, since we imposed the requirement that the solutions have a minimal number 
of $\delta$-distributions and we cannot connect the states $(U_{M1},V_{M1})$ 
and $(U_{M1},-V_{M1})$ using the $\delta$-shock since such a choice would yield a solutions
with a higher number of singular parts than the previously described solution.

Thus the shock connecting the states $(U_{M1},V_{M1})$ and $(U_{M1},-V_{M1})$ 
cannot be singular, (i.e. there can be no Rankine-Hugoniot deficit),
and therefore the speed $s$ of the shock  must satisfy the Rankine-Hugoniot condition
$$
s=U_{M1}.
$$ 
On the other hand, the characteristic speeds of $(U_{M1},V_{M1})$ and $(U_{M1},-V_{M1})$ 
are $\lambda_1(U_{M1},V_{M1})=\lambda_1(U_{M1},-V_{M1}) \neq s$, and since these are equal,
the shock connection between $(U_{M1},V_{M1})$ and $(U_{M1},-V_{M1})$ 
is impossible with Rankine-Hugoniot condition satisfied.

Similarly, the same requirement makes it impossible to connect $(U_{M2},V_{M2})$ and $(U_{M2},-V_{M2})$ 
by a $\delta$-shock. In this case, the shock speed satisfies the Rankine-Hugoniot condition 
$$
s=U_{M2}.
$$ 
Furthermore, we have equality of speeds $\lambda_2(U_{M2},V_{M2})=\lambda_2(U_{M2},-V_{M2})$, 
but we have the contrasting inequality $\lambda_2(U_{M2},V_{M2})=\lambda_2(U_{M2},-V_{M2})\neq s$ 
implying that a shock connection between $(U_{M2},V_{M2})$ and $(U_{M2},-V_{M2})$ is not possible 
if the Rankine-Hugoniot condition is satisfied. 
The same procedure leads to the conclusion that a $\delta$-shock connection 
between $(U_{M},V_{M})$ and $(U_{M},-V_{M})$ is impossible with Rankine-Hugoniot condition satisfied.

Hence, the only possible connection of $(U_L,V_L)$ and $(U_R,V_R)$ is 
by the combination RW1 and RW2 via the state $(U_M,V_M)$. 
Consequently, we remark that RW1 and RW2 corresponding to \eqref{Transformed system} 
are transformed via $(u,q)\mapsto (u,\sqrt{2q-u^2})$ into RW1 and RW2 corresponding 
to \eqref{Brio System} 
(since $q$ is the entropy function for \eqref{Brio System}, 
and RW1 and RW2 are smooth solutions to \eqref{Transformed system}).

\begin{figure}
  \begin{center}
     {\includegraphics[scale=0.7]{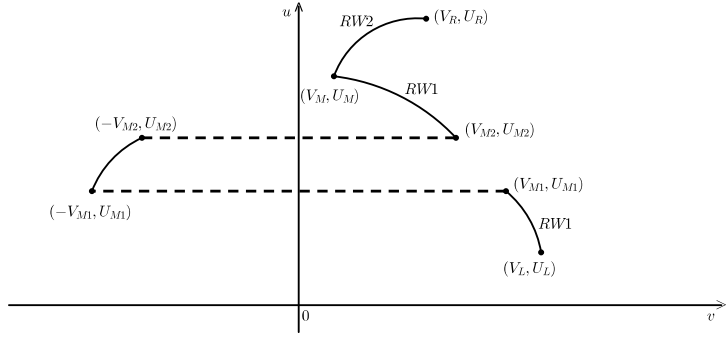}}
  \end{center}
  \caption{\small Nonadmissible connection between rarefaction wave curves of the 					first and the second families}
\label{Fig6}
\end{figure}

\item Region $II$:
The states $(U_L,q_L)$ and $(U_R,q_R)$ are connected by the combination SW1 and RW2 via the state $(U_M,q_M)$.

Unlike the previous case, we have a shock wave in \eqref{Transformed system},
and we will necessarily have a Rankine-Hugoniot deficit in the original system \eqref{Brio System}. 
We thus define 
\begin{equation}
\label{S1R2}
(u,v)=(u,\sqrt{2q-u^2})+(0,\beta(t) \delta(x-ct)),
\end{equation} 
where $c$ is the speed of the SW1 connecting the states $(U_L,q_L)$ and $(U_M,q_M)$ 
in \eqref{Transformed system}. The speed $c$ is given by \eqref{RH-def1} 
as well as the corresponding Rankine-Hugoniot deficit $\beta(t)$:
\begin{equation}
\label{RHdef}
c=\frac{\frac{U_L^2+V_L^2}{2}-\frac{U_R^2+V_R^2}{2}}{U_L-U_R}, \ \ \beta(t)=(c(V_L-V_R)-(V_L(U_L-1)-V_R(U_R-1))t.
\end{equation} 
Concerning the other possible solutions, as in the previous item, 
we can only split the curve connecting $(U_L,V_L)$ and $(U_M,V_M)$ into several new curves 
e.g. by connecting the states $(U_L,V_L)$ and $(U_{M1},V_{M1})$, 
then the (opposite with respect to $v$) states $(U_{M1},V_{M1})$ and $(U_{M1},-V_{M1})$, 
then $(U_{M1},-V_{M1})$ and $(U_{M2},-V_{M2})$, 
then $(U_{M2},-V_{M2})$ and $(U_{M2},V_{M2})$ etc. until we reach $(U_M,V_M)$. 
The states $(U_{M1},V_{M1})$ and $(U_{M1},-V_{M1})$ can be connected only by the shock satisfying 
the Rankine-Hugoniot conditions 
(due to the minimality condition on $\delta$-shocks, we cannot have a Rankine-Hugoniot deficit). 

Since we cannot have the Rankine-Hugoniot deficit, as in the previous item, 
we must connect the various states with shock waves satisfying the Rankine-Hugoniot conditions, 
and at the same time being equal to the speed $c$ 
(the speed of the SW1 connecting the states $(U_L,q_L)$ and $(U_M,q_M)$ in \eqref{Transformed system}). 
This is obviously never fulfilled i.e. the only solution in this case is \eqref{S1R2}.

\item Region $III$:

The states $(U_L,q_L)$ and $(U_R,q_R)$ are connected by the combination RW1 and SW2 via the state $(U_M,q_M)$.

The analysis for the existence and uniqueness proceeds along the same lines as the first two cases. 
The admissible (and thus unique) $\delta$-type solution in this case has the form:
\begin{equation}
\label{R1S2}
(u,v)=\big(u,\sqrt{2q-u^2}\big)+\big(0,\beta(t) \delta(x-ct)\big),
\end{equation}
where $c$ in this case represents the speed of the SW2 connecting 
the states $(U_R,q_R)$ and $(U_M,q_M)$ in \eqref{Transformed system}. 
The speed $c$ and the corresponding Rankine-Hugoniot deficit $\beta(t)$ are given in \eqref{RH-def1} 
and explicitly expressed as in \eqref{RHdef}. The solution structure is represented by 
\begin{equation*}
(U_L,V_L)\xrightarrow{RW1}(U_M,V_M)\xrightarrow{SW2}(U_R,V_R),
\end{equation*}
where the $\delta$-shock propagates at the speed $c=\lambda_1(U_M,V_M)=\lambda_1(U_M,-V_M)$. 
Notice that it is possible to generate infinitely many non-admissible 
(in the sense of Definition \ref{admissibility}) solutions 
(in the sense of Definition \ref{def-gen}) by partitioning the rarefaction wave 
of the first family that connects the states $(U_L,V_L)$ and $(U_M,V_M)$. 
The solution is constructed by connecting $(U_L, V_L)$ and $(U_{M1},V_{M1})$ 
by RW1 and then passing over to $(U_{M1},-V_{M1})$ by a shock 
which satisfies the Rankine-Hugoniot conditions $s=U_{M1}$. 
The procedure is advanced to connect all the finite possible states $(U_{Mk},V_{Mk})$ and $(U_{Mk},-V_{Mk})$ 
by a shock satisfying both the Rankine-Hugoniot conditions $s=U_{Mk}$, where $k\in\mathbb{Z}_+$ 
and the speed of the shock of the second family connecting the states $(U_M,-V_M)$ and $(U_R,V_R)$. 
This process is carried out prior to the state $(U_M,V_M)$ 
and the shocks connecting pairs of states cannot be admissible in the sense of Definition 3.1 
due to the minimality condition. 
Consequently, the only solution admissible in this sense is \eqref{R1S2}.

\item Region $IV$:
The states $(U_L,q_L)$ and $(U_R,q_R)$ are connected by the combination SW1 and SW2 via the state $(U_M,q_M)$.

The presence of shocks in this case will necessarily introduce Rankine-Hugoniot 
deficit in \eqref{Brio System}. The solution is constructed by solving 
\eqref{Transformed system} for the solution $(u,q)$ and then go back to 
\eqref{Brio System} to obtain the admissible $\delta$-type solution 
\begin{equation}\label{S1andS2}
(u,v)=\big(u,\sqrt{2q-u^2}\big)+\big(0,\beta_1(t)\delta(x-c_1t)\big)+\big(0,\beta_2(t)\delta(x-c_2t)\big),
\end{equation}
where $c_1$ and $c_2$ given by the expressions
\begin{equation}
c_1=\frac{\frac{U_L^2+V_L^2}{2} - \frac{U_M^2+V_M^2}{2}}{U_L-U_M} \ \ \ \ \text{and} \ \ \ \ \ c_2=\frac{\frac{U_M^2+V_M^2}{2} - \frac{U_R^2+V_R^2}{2}}{-U_M-U_R},
\end{equation}
are the speeds of the shocks SW1 and SW2 respectively. The Rankine-Hugoniot 
deficits $\beta_1(t)$ and $\beta_2(t)$ are expressed as in \eqref{RHdef} for 
the appropriate states. The analysis for uniqueness of \eqref{S1andS2} is 
similar to the above cases except that all the elementary waves involved in this 
case are shocks.

\end{itemize}

Now, assume that $V_L>0$ and $V_R<0$. It was shown in \cite{HL} that in this case, 
the Riemann problem \eqref{Brio System}, \eqref{riemann} does not admit a Lax admissible solution,
even for initial data with small variation.

In order to get an admissible $\delta$-type solution, as before, 
we solve \eqref{Transformed system} with $(U_0,q_0)$ as the initial data. 
The obtained solution connects $(U_L,q_L)$ with $(U_R,q_R)$ by Lax admissible waves through a 
middle state $(U_M,q_M)$. Next, we go back to the original system \eqref{Brio System} 
by connecting $(U_L,V_L)$ with $(U_M,\sqrt{2q_M-U_M^2})$ by an elementary wave 
containing the corresponding Rankine-Hugoniot deficit corrected by the $\delta$-shock wave. 
Then, we connect $(U_M,\sqrt{2q_M-U_M^2})$ with $(U_M,-\sqrt{2q_M-U_M^2})$ 
by the shock wave whose speed will obviously be $U_M$. 
Finally, we connect $(U_M,-\sqrt{2q_M-U_M^2})$ with $(U_R,V_R)$ 
by an elementary wave containing corresponding Rankine-Hugoniot deficit 
corrected by the $\delta$-shock wave.

Let us first show it is possible to apply the described procedure. 
We again need to split considerations into four possibilities depending 
on how the states $(U_L,q_L)$ and $(U_R,q_R)$ are connected.

\begin{figure}
  \begin{center}
     {\includegraphics[scale=0.8]{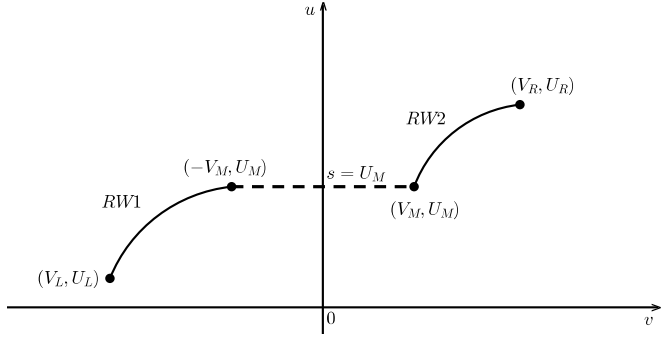}}
  \end{center}
  \caption{\small Admissible connection between rarefaction wave curves of the first and second families
in the case when the left state has $V_L < 0$ and the right state has $V_R > 0$. In this case, a shock
connecting the states $(-V_M,U_M)$ and $(V_M,U_M)$ has to be fitted between the rarefaction curves.
It is shown in the part of the proof pertaining to region I that this shock has the required speed.
}
\label{Fig7}
\end{figure}

\begin{itemize}

\item  Region $I$:
The states $(U_L,q_L)$ and $(U_R,q_R)$ are connected by RW1 and RW2 via the middle state $(U_M,q_M)$.

It is clear that we can connect $(U_L,V_L)$ with $(U_M,\sqrt{2q_M-U_M^2})$ using RW1 
(it is the same for both equations since RW1 and RW2 are smooth solutions to \eqref{Transformed system}). 
Also, we can connect $(U_M,-\sqrt{2q_M-U_M^2})$ with $(U_R,V_R)$ using RW2. 
We need to prove that the shock wave connecting $(U_M,\sqrt{2q_M-U_M^2})$ 
and $(U_M,-\sqrt{2q_M-U_M^2})$ has a speed which is 
between $\lambda_1(U_M,\sqrt{2q_M-U_M^2})$ and $\lambda_2(U_M,-\sqrt{2q_M-U_M^2})$.

In other words, we need to check
\begin{align*}
U_M - \frac{1}{2} - \sqrt{V_M^2 + \Sfrac{1}{4}} \leq U_M \leq U_M - \frac{1}{2} + \sqrt{V_M^2 + \Sfrac{1}{4}}
\end{align*} 
which is obviously correct. This configuration is depicted in Figure \ref{Fig7}.

\item  Region $II$:

The states $(U_L,q_L)$ and $(U_R,q_R)$ are connected by SW1 and SW2 via the middle state $(U_M,q_M)$.

As in the previous item, we connect $(U_L,V_L)$ with $(U_M,\sqrt{2q_M-U_M^2})$ this time using the SW1 from \eqref{Transformed system} which will induce the Rankine-Hugoniot deficit in \eqref{Brio System}. Then, we skip from $(U_M,\sqrt{2q_M-U_M^2})$ to $(U_M,-\sqrt{2q_M-U_M^2})$ using the standard shock wave (the one satisfying the Rankine-Hugoniot conditions), and finally we go from $(U_M,-\sqrt{2q_M-U_M^2})$ to $(U_R,V_R)$ using the SW2 from \eqref{Transformed system} and corrected with an appropriate $\delta$-shock. More precisely, the admissible $\delta$-type solution will have the form:
\begin{equation}
\label{S1S2}
\begin{split}
u(t,x)&=U_L+(U_M-U_L)(H(x-c_1 t)-H(x-ct))\\&+(-U_M-U_L)(H(x-c t)-H(x-c_2t))+(U_R-U_L)H(x-c_2t)\\
v(t,x)&=V_L+(V_M-V_L) (H(x-c_1 t)-H(x-ct))\\&+(V_M-V_L)(H(x-c t)-H(x-c_2t))+(V_R-V_L)H(x-c_2t)\\&+\beta_1(t) \delta(x-c_1t)+\beta_2(t) \delta(x-c_2t),
\end{split}
\end{equation}where $c_1$ is the speed of the SW1 connecting $(U_L,q_L)$ with $(U_M,q_M)$ in \eqref{Transformed system}, $c_2$ is the speed of the SW2 connecting $(U_M,q_M)$ with $(U_R,q_R)$ in \eqref{Transformed system}, while $c$ is the speed of the shock connecting $(U_M,-\sqrt{2q_M-U_M^2})$ with $(U_M,\sqrt{2q_M-U_M^2})$ and it is given by the Rankine-Hugoniot conditions from \eqref{Brio System}. The deficits $\beta_1$ and $\beta_2$ are given by Theorem \ref{thm-cnl} (see \eqref{RHdef} for the analogical situation).

However, we still need to prove that \eqref{S1S2} is well defined, i.e. that
\begin{align*}
&c_1 \leq c \leq c_2 \ \ \implies \\
& \frac{2 U_M-1-\sqrt{8 q_L+1+\frac{4 U_M^2}{3}-\frac{8 U_L U_M}{3}-\frac{8 U_L^2}{3}-2 U_M + 2 U_L}}{2} \leq U_M \\&
\leq \frac{2U_M-1+\sqrt{8q_R+1+\frac{4U_M^2}{3}-\frac{8 U_M U_R}{3}-\frac{8 U_R^2}{3}-2U_M+2U_R}}{2} 
\end{align*} 
which is also clearly true.

\item  Region $III$:
The states $(U_L,q_L)$ and $(U_R,q_R)$ are connected by RW1 and SW2 via the middle state $(U_M,q_M)$.

This case, as well as the following one, is handled by combining the previous two cases.

\item Region $IV$:

The states $(U_L,q_L)$ and $(U_R,q_R)$ are connected by SW1 and RW2 via the middle state $(U_M,q_M)$.

\end{itemize} 

Uniqueness is obtained by arguing as in the first part of the proof.

\end{proof}

\section*{Acknowledgements}
This research was supported by the Research Council of Norway under grant no. 213747/F20
and grant no. 239033/F20.


\begin{thebibliography}{99}
\setlength{\itemsep}{0.2mm}
{\small


\bibitem{brio} M.~Brio, 
{\em Admissibility conditions for weak solutions to non-strictly hyperbolic systems}, 
J.Ballmann et al. (eds.), Nonlinear Hyperbolic Equations -- Theory, Computation methods and Applications, 1989. 
Friedr. Vieweg and Sohn Verlagsgesellschaft mbH, Braunschweig. 


\bibitem{BB} S.~Bianchini, A.~Bressan, 
{\em Vanishing viscosity solutions of nonlinear hyperbolic systems}, 
Ann. of Math. {\bf 161} (2005), 223--352.

\bibitem{Daf} C.~Dafermos, 
{\em Hyperbolic Conservation Laws in Continuum Physics},
Springer,Grundlehren der mathematischen Wissenschaften, Vol. 325.


\bibitem{DMLM}G.~Dal Maso, P.~LeFloch and F.~Murat,
{\em Definition and weak stability of non-conservative products},
J. Math. Pures Appl. {\bf 74} (1995), 483--548.


\bibitem{DOS} V.G. Danilov, G.A. Omel'yanov, V.M. Shelkovich,
	{\em Weak asymptotic methods and interaction of nonlinear waves}, 
	Asymptotic methods for wave and quantum problems (ed. M.~V.~Karasev), 
	American Mathematical Translations Series 2, Vol. 208, RI: American 
	Mathematical Society, Providence,2003), 33--165.


\bibitem{DSH} V.~G.~Danilov and V.~M.~Shelkovich,
\textit{Dynamics of propagation and interaction of $\delta$-shock
waves in conservation law system}, J. Differential Equations
\textbf{211} (2005), 333--381.


\bibitem{GKK} S.~Gavrilyuk, H.~Kalisch and Z.~Khorsand.
\textit{ A kinematic conservation law in free surface flow},
Nonlinearity {\bf 28} (2015), 1805--1821.



\bibitem{HL} B.~Hayes and P.~G.~LeFloch, {\em Measure-solutions to a strictly hyperbolic system of conservation laws},
Nonlinearity {\bf 9} (1996), 1547--1563.


\bibitem{Huang} F.~Huang,
{\em Existence and uniqueness of discontinuous solutions for a hyperbolic system},
Proc. Roy. Soc. Edinburgh  {\bf 127} (1997), 1193--1205.


\bibitem{huangWang} F.~Huang, Z.~Wang,
{\em Well posedeness for pressureless flow},
Comm. Math. Phys. {\bf 222} (2001), 117--146.



\bibitem{KM} H.~Kalisch and D.~Mitrovi\'{c},
{\em Singular solutions of a fully nonlinear $2\times2$ system of conservation laws},
Proc. Edinb. Math. Soc. {\bf 55} (2012), 711--729.


\bibitem{KMT} H.~Kalisch, D.~Mitrovic and V.~Teyekpiti,
	{\em Delta shock waves in shallow water flow}, 
	Phys. Lett. A {\bf 381} (2017), 1138--1144.

\bibitem{KaTe} H.~Kalisch and V.~Teyekpiti,
	{\em A shallow-water system with vanishing buoyancy}, submitted.

\bibitem{KT} B.~Keyfitz, C.~Tsikkou, 
{\em Conserving the wrong variables in gas dynamics: a Riemann solution with singular shocks}, 
Quart. Appl. Math. 70 (2012), no. 3, 407--436. 

\bibitem{KK} B.~Keyfitz, H.~Kranzer, 
{\em Spaces of weighted measures for conservation laws with singular shock solutions}, 
J. Differential Equations {\bf 118} (1995), 420--451. 

\bibitem{Korchinski} C.~Korchinski,
{\em Solution of a Riemann problem for a $2\times2$ system of conservation laws possessing no
      classical weak solution}, PhD Thesis, Adelphi University, 1977.

\bibitem{Lax} P.D.~Lax, 
{\em Hyperbolic systems of conservation laws II}, 
Comm. Pure Appl. Math. {\bf 10} (1957), 537--566.

\bibitem{Liu} T.P.~Liu, 
{\em The entropy condition and the admissibility of shocks}, 
J. Math. Anal. Appl. {\bf 53} (1976), 78--88.

\bibitem{Maz} M.~Mazzotti, A.~Tarafder, J.~Cornel, F.~Gritti and G.~Guiochon, 
{\em Experimental evidence of a $\delta$-shock in nonlinear chromatography}, 
J. Chromatography. A, {\bf 1217} (2010), 2002--2012.

\bibitem{N1}M.~Nedeljkov, 
{\em Higher order shadow waves and delta shock blow up in the Chaplygin gas}, 
J. Differential Equations {\bf 256} (2014),  3859--3887.

\bibitem{MN_arma} M.~Nedeljkov, 
{\em Shadow Waves: Entropies and Interactions for Delta and Singular Shocks}, 
Arch. Ration. Mech. Anal. {\bf 197} (2010), 489--537.

\bibitem{Omelyanov} G.A.~Omelyanov
{\em About the stability problem for strictly hyperbolic systems of conservation laws},
Rend. Sem. Mat. Univ. Politec. Torino {\bf 69} (2011), 377--392.

\bibitem{Sarr} C.O.R.~Sarrico, 
{\em The Riemann problem for the Brio system: a solution containing a Dirac mass obtained via a distributional product},
Russ. J. Math. Phys. {\bf 22} (2015), 518--527.

\bibitem{Sun}M.~Sun, {\em Delta shock waves for the chromatography equations as self-similar viscosity limits},
 Quarterly of Applied Mathematics {\bf 69} (2011), 425-443.

\bibitem{Tsikkou} C.~Tsikkou, 
{\em Hyperbolic conservation laws with large initial data. Is the Cauchy problem well-posed?}, 
Quart. Appl. Math. {\bf 68} (2010), 765--781.

\bibitem{guwang} G.~Wang, 
{\em One-dimensional nonlinear chromatography system and delta-shock waves}, 
Z. Angew. Math. Phys. {\bf 64} (2013), 1451--1469.



}
\end{thebibliography}
\end{document}